\theoremstyle{plain}
\newtheorem{theorem}{Theorem}[section]
\newtheorem{lemma}[theorem]{Lemma}
\newtheorem{proposition}[theorem]{Proposition}
\newtheorem{corollary}[theorem]{Corollary}
\theoremstyle{definition}
\newtheorem{definition}{Definition}
\newtheorem{remark}{Remark}
\numberwithin{equation}{section}
\newcommand{\ML}{\mathcal{ML}_g}
\newcommand{\Th}{\operatorname{Th}}
\newcommand{\Mod}{\operatorname{Mod}_g}
\newcommand{\Mir}{B_{\operatorname{Mir}}}
\newcommand{\Pg}{\mathcal{P}_g}
\begin{document}

\title{Volume of unit balls associated to quadratic differentials}

\author{Weixu Su and Shenxing Zhang }

\address{Weixu Su: School of Mathematics, Sun Yat-sen University, Guangzhou 510275, China}
\email{suwx9@mail.sysu.edu.cn}

\address{Shenxing Zhang: School of Mathematics and Statistics, Nanjing University of Science and Technology, Nanjing, 210094, China} \email{21110180030@m.fudan.edu.cn}

\date{\today}

\maketitle


\noindent

\begin{abstract}
Associated to a holomorphic quadratic differential is a unit ball of the measured lamination space. The Thurston volume of the unit ball defines a function on the moduli space. We show that the volume function is not proper and characterize when it tends to infinity. We prove that the volume function is $p$-integrable for any $0<p<1$. 

 \medskip

\noindent {\bf Keywords:} Flat surface; moduli space; Thurston volume. 

\medskip

\noindent  {\bf MSC2020:} {30F30, 30F60.}
\end{abstract}

\section{Introduction}

Holomorphic quadratic differentials on Riemann surfaces play an important role in 
Teichm\"uller theory. In this paper, we focus on geodesic lengths of simple closed curves on the flat surfaces determined by holomorphic quadratic differentials.

 
Let $X$ be a Riemann surface of genus $g\geq 2$.
A \emph{holomorphic quadratic differential} $q$ is a $(2,0)$ form on $X$
locally given by 
$$q=q(z) dz^2,$$
where $q(z)$ is a holomorphic function.

Let $(X,q)$ be a non-zero holomorphic quadratic differential. Denote the set of zeros of $q$ by 
$\Sigma(q)$. Then $|q|^{1/2}$ defines a flat metric on $X$ with singularities at
$\Sigma(q)$.  A zero of order
 $k$ corresponds to a singularity of angle $(k+2)\pi$. 
Any essential simple closed curve $\gamma$ on $X$ is freely homotopic to a closed geodesic with respect to the metric $|q|^{1/2}$.
Typically a geodesic representation in a homotopy class of $\gamma$ 
is realized by a union of \emph{saddle connections}, i.e., geodesic segments going from one zero of $q$
to the other. The geodesic representation is unique except that there is a maximal flat cylinder
swept out by closed geodesics homotopic to $\gamma$.

Denote the length of a geodesic representation of $\gamma$ by $\ell_q(\gamma)$. 
The length function $\ell_q(\gamma)$ extends continuously to the space of measured foliations, or equivalently, the space of measured geodesic laminations.
In fact, Duchin-Leininger-Rafi \cite[Proposition 26]{DLR2010}\ showed that there is a natural geodesic current $L_q$ associated to $q$ such that the intersection number of $L_q$ with any simple closed curve $\gamma$ is exactly 
$\ell_q(\gamma)$. 

Let $\ML$ be the space of measured geodesic laminations on $X$.  
There is a natural volume form $\mu_{\Th}$ on $\ML$ called the 
\emph{Thurston measure}, 
which is induced by the  piecewise linear integral structure of $\ML$.
The Thurston measure $\mu_{\Th}$ is invariant and ergodic under the action of the mapping class group. 
For more details we refer the reader to  \cite{ES}.

\begin{definition}[Volume function] Let $(X,q)$ be a non-zero holomorphic quadratic differential. 
We define the unit ball associated to $(X,q)$ by
$$U(X,q) = \{\lambda \in \ML \ | \ \ell_q(\lambda) \leq 1\}.$$
The \emph{Thurston volume} of  $U(X,q)$ is denoted by 
$B(X,q)$.
\end{definition}

As shown by \cite[Theorem 1]{DLR2010}, the lengths of simple closed
curves determine a quadratic differential up to rotation,
this means that
the shape of the unit ball $U(X,q)$ uniquely determines the flat metric induced by $q$. 

The function $B(X,q)$ is an analogue of the \emph{Mirzakhani function}. 
Recall that for any hyperbolic surface $X$, the hyperbolic length functions 
of measured geodesic laminations induce a  unit ball in $\ML$.
The Thurston measure of such a unit ball is now called the Mirzakhani function and denoted by 
$\Mir(X)$.
Mirzakhani \cite[Theorem 3.3]{Mir2008}
shows that $\Mir(X)$ is proper on the moduli space of Riemann surfaces $\mathcal{M}_g$ and integrable with respect to the Weil-Petersson volume form.
Recently,
Arana-Herrera and Athreya \cite{AA}
improve Mirzakhani's result by showing that
$\Mir(X)$ is square-integrable.

A remarkable result due to Mirzakhani is that
for any given simple closed curve $\gamma_0$ on $X$ 
\begin{equation*}\label{equ:Mir}
 \lim_{L \to \infty} \frac{\# \{\gamma \in \Mod \cdot \gamma_0 : \ell_X(\gamma) \leq L \}}{L^{6g-6}} = \frac{c(\gamma_0)}{b_g} \Mir(X).
\end{equation*}
For any given simple closed curve $\gamma_0$ on $X$, we also have  
$$\lim_{L \to \infty} \frac{\# \{\gamma \in \Mod \cdot \gamma_0 : \ell_q(\gamma) \leq L \}}{L^{6g-6}} = \frac{c(\gamma_0)}{b_g} B(X,q).$$
This follows from a result of Erlandsson-Souto \cite[Theorem 1.2]{ES},
which holds 
for any filling geodesic current. We believe that $B(X,q)$, 
considered as a function on the moduli space of quadratic differentials, will  be useful to understand random flat surfaces.

\subsection{Main theorems}
Let $\mathcal{M}_g$ be the moduli space of Riemann surfaces of genus $g\geq 2$.
Let $X\in \mathcal{M}_g$, 
endowed with the complete hyperbolic metric.
For a holomorphic quadratic differential $(X,q)$ and a measurable subset $A$ of $X$, we define 
$$m(A, q) = \int_A |q|.$$

Let $Q^1\mathcal{M}_{g}$ be the moduli space of quadratic differentials 
of unit area, i.e., $m(X,q)=1$.
A sequence of $(X_n,q_n)$ in $Q^1\mathcal{M}_{g}$ is called \emph{divergent}
if it leaves any compact subset of $Q^1\mathcal{M}_{g}$.
This is equivalent to requiring that the systole, i.e., the
 length of the shortest simple closed geodesic of $X_n$ or $|q_n|^{1/2}$
tends to $0$ as $n\to\infty$.

Let $(X_\infty, q_\infty)$ be a quadratic differential on a noded Riemann surface,
which is assumed to be the limit of a divergent sequence $ (X_n,q_n)$ in $Q^1\mathcal{M}_{g}$. 
Assume that $\alpha_1, \cdots, \alpha_k$ is the maximal collection of pairwise disjoint simple closed curves
that are pinched to the nodes of $X_\infty$. 
We assume that each $\alpha_j$ is homotopic to a maximal flat cylinder $C_n(\alpha_j)$  on $(X_n,q_n)$ (when there is no such a cylinder,  we consider $C_n(\alpha_j)$ to be degenerate and assume that $m(C_n(\alpha_j), q_n)=0$). 
 For simplicity of notation,
we write  $C_j$ instead of  $C_n(\alpha_j)$ when no confusion can arise. 

Assume that
$$X_n \setminus \bigcup_{j=1}^k C_j = \bigcup_{j=1}^\ell \Omega_j,$$
where each $\Omega_j$ is a connected component of $X_n \setminus \bigcup_{j=1}^k C_j $. A connected component $\Omega_j$ is \emph{stable} if it is not a pair of pants.
The decomposition of $(X_n,q_n)$ into the union of $C_j$ and $\Omega_j$
is called the \emph{thick-thin decomposition}.

We now introduce the following notion of \emph{regular sequence}. 
\begin{definition}
Let $\epsilon>0$ be a sufficiently small constant. 
A sequence of $(X_n,q_n)$ in $Q^1\mathcal{M}_{g}$ is \emph{regular} if 
it satisfies: 
\begin{itemize}
  \item For $n$ sufficiently large there is a maximal collection of pairwise disjoint simple closed curves $\alpha_1, \cdots, \alpha_k$ such that $\ell_{X_n}(\alpha_j) \to 0 $ as $n\to\infty$. 
  \item  In the thick-thin decomposition of $(X_n,q_n)$ (where the cylinders are corresponding to $\alpha_1, \cdots, \alpha_k$), we have 
some $\delta>0$ independent of $n$ such that $m(C_j, q_n)>\delta$ for every cylinder $C_j$ and 
 $m(\Omega_j, q_n)>\delta$ 
 for every stable component $\Omega_j$.
\end{itemize}

\end{definition}

Our first result is to show that the function $B(X,q)$ on $Q^1\mathcal{M}_{g}$
is not proper, which
behaves differently from $\Mir(X)$. 
More precisely, we show that 
 $B(X_n,q_n)$ are bounded if and only if $(X_n, q_n)$ is a regular sequence. 

\begin{theorem}\label{thm:limit}
Let $\left\{(X_n,q_n)\right\}$ be a divergent sequence  in  $Q^1\mathcal{M}_{g}$.
 Then $B(X_n, q_n) \to \infty$  except the case that the sequence has a regular subsequence. 
 When the sequence $(X_n,q_n)\in Q^1\mathcal{M}_{g}$ is regular, $B(X_n, q_n)$  are  bounded above by a constant depending only on the limit $(X_\infty, q_\infty)$.
\end{theorem}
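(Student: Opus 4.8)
The plan is to reduce both assertions to one two-sided length estimate in Dehn--Thurston coordinates, and then to a volume computation showing that $B(X_n,q_n)$ is, up to multiplicative constants, a product of reciprocals of the $q_n$-areas of the degenerating regions. First I would pass to a subsequence so that the homeomorphism type of the thick--thin decomposition is fixed, the areas $m(C_i,q_n)$ and $m(\Omega_j,q_n)$ converge in $[0,1]$, and each stable piece $\Omega_j$, rescaled to unit area, converges in moduli space; maximality of the short-curve system $\alpha_1,\dots,\alpha_k$ forces the systole of each $\Omega_j$ to be bounded below uniformly in $n$, so the rescaled pieces stay in a fixed compact set of unit-area flat surfaces. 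Extend $\{\alpha_i\}$ to a pants decomposition whose remaining curves lie inside the pieces, and use the associated Dehn--Thurston coordinates $(m_i,t_i)$ on $\ML$, in which $\mu_{\Th}$ is Lebesgue up to a constant. Writing $c_i,h_i$ for the circumference and height of the maximal flat cylinder $C_i$, the heart of the argument is the comparison
\[
\ell_{q_n}(\lambda)\ \asymp\ \sum_{i=1}^{k}\sqrt{m_i^{2}h_i^{2}+t_i^{2}c_i^{2}}\ +\ \sum_{j}\ell_{q_n}(\lambda|_{\Omega_j}),
\]
with implicit constant depending only on $g$, where $m_i=i(\lambda,\alpha_i)$, $t_i$ is the twist, and $\ell_{q_n}(\lambda|_{\Omega_j})$ is the $|q_n|^{1/2}$-length of the part of $\lambda$ inside $\Omega_j$. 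The lower bound is essentially immediate: a $q_n$-geodesic representative of $\lambda$ meets $C_i$ in $m_i$ parallel straight strands whose total horizontal displacement is comparable to $t_i c_i$, giving the term $\sqrt{m_i^2h_i^2+t_i^2c_i^2}$, and it must also contain the $q_n$-geodesic realisation of $\lambda|_{\Omega_j}$ in each piece. The pinching hypothesis gives $c_i\ll h_i$, which will be used repeatedly.

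\textbf{From the estimate to a product formula.} For a stable piece $\Omega_j$ of area $A_j^{(n)}=m(\Omega_j,q_n)$, the flat unit-ball volume scales as $\hat B(\Omega_j^{(n)})=(A_j^{(n)})^{-d_j/2}\,\hat B(\Omega_j,\tilde q_n)$ under rescaling to unit area $\tilde q_n$, where $d_j\ge 0$ is the number of Dehn--Thurston coordinates internal to $\Omega_j$ (so $d_j=0$ for a pair of pants, which contributes no factor); since $\tilde q_n$ stays in a compact set, $\hat B(\Omega_j,\tilde q_n)\asymp 1$, hence $\hat B(\Omega_j^{(n)})\asymp (A_j^{(n)})^{-d_j/2}$. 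Now split the Dehn--Thurston coordinates into the cylinder planes $(m_i,t_i)$, $i\le k$, and the internal coordinates of the $\Omega_j$, and integrate the indicator of $\{\,\cdot\,\le 1\}$ against each side of the comparison. On the cylinder plane, $\{\sqrt{m_i^2h_i^2+t_i^2c_i^2}\le r,\ m_i\ge 0\}$ is a half-ellipse of area $\tfrac{\pi r^2}{2c_ih_i}=\tfrac{\pi r^2}{2\,m(C_i,q_n)}$; and on the constrained set the boundary weights $m_i$ entering $\ell_{q_n}(\lambda|_{\Omega_j})$ are forced so small that they cost nothing (the splicing error $\lesssim\sum m_ic_i$ is negligible since $c_i\ll h_i$), so the piece contributions decouple from the cylinder contributions. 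This yields
\[
B(X_n,q_n)\ \asymp_{g}\ \prod_{i=1}^{k}\frac{1}{m(C_i,q_n)}\ \cdot\ \prod_{\Omega_j\ \text{stable}}\frac{1}{m(\Omega_j,q_n)^{d_j/2}}.
\]

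\textbf{Conclusion.} If the sequence is regular then every factor is at most $\delta^{-1}$, respectively $\delta^{-d_j/2}$, so $B(X_n,q_n)$ is bounded by a constant depending only on $\delta$ and the combinatorial type, both of which are recorded by the limit $(X_\infty,q_\infty)$ (the limiting cylinder areas are the quadratic residues of $q_\infty$ at the nodes, and the stable pieces are the non-nodal components of $X_\infty$). Conversely, if a divergent sequence has no regular subsequence, pass to a subsequence along which the thick--thin data converges; it is still not regular, so $\min\{m(C_i,q_n),\,m(\Omega_j,q_n)\}\to 0$, hence one factor in the product tends to $\infty$ while all others are $\ge 1$, and $B(X_n,q_n)\to\infty$ along the subsequence; since this holds along every such subsequence, $B(X_n,q_n)\to\infty$. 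A degenerate cylinder $C_i$ (which already forces non-regularity) is treated by the same scheme, replacing the vanishing term $\sqrt{m_i^2h_i^2+t_i^2c_i^2}$ by the analogous term coming from a shrinking saddle connection inside the collar of $\alpha_i$, whose $q_n$-length also tends to $0$.

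\textbf{Main obstacle.} The decisive step is the \emph{upper} bound in the comparison estimate: producing, for an arbitrary measured lamination $\lambda$, a representative whose $q_n$-length is within a uniform multiplicative constant of $\sum_i\sqrt{m_i^2h_i^2+t_i^2c_i^2}+\sum_j\ell_{q_n}(\lambda|_{\Omega_j})$, with the constant independent of $n$ even as the flat structures $q_n$ degenerate. This is where the flat geometry of the thick--thin decomposition (in the spirit of Rafi and of Duchin--Leininger--Rafi) must be used: one has to control how the strands of $\lambda$ thread each cylinder, how their endpoints on the cylinder boundaries are matched to $\lambda|_{\Omega_j}$, and the behaviour near the zeros of $q_n$ lying on $\partial\Omega_j$. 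The remaining ingredients---that $\mu_{\Th}$ is Lebesgue in Dehn--Thurston coordinates, that $\ell_q$ is the length function of a filling current and hence proper on $\ML$ (so that $\hat B$ is finite and positive), and that unit-area flat surfaces with systole bounded below form a compact family---are standard and will be invoked directly.
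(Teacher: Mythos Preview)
Your approach is essentially the paper's: pass to a subsequence with fixed thick--thin combinatorics, prove a two-sided comparison between $\ell_{q_n}$ and a weighted $\ell^1$-norm in Dehn--Thurston coordinates, and read off $B(X_n,q_n)$ as a product of reciprocals of piece areas. Two points are worth sharpening. First, the concrete device the paper uses for both directions of the length comparison is a short ``spiral lemma'' (a saddle connection entering a maximal flat cylinder $C_j$ spirals around $\partial C_j$ at most once before crossing); this is exactly what pins the Dehn--Thurston twist $t_j$ to the twist actually realised inside $C_j$, and you need it for your lower bound on $\ell_{q_n}$ just as much as for the upper bound---your claim that the lower bound is ``essentially immediate'' is a bit optimistic, since without this control the geodesic could in principle accumulate twist outside $C_j$. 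Second, the assertion that the rescaled stable pieces ``stay in a fixed compact set of unit-area flat surfaces'' is not quite right as stated (the boundary circumferences may still go to zero after rescaling); the paper instead appeals to a convergence result of Lenzhen--Masur to a finite-area meromorphic limit $(\Omega_j,\psi_j)$, and the constants in the product formula then depend on $\psi=\bigcup_j\psi_j$, which is how the dependence on $(X_\infty,q_\infty)$ enters.
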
 

 In particular, we have  $B(X_n, q_n) \to \infty$ if there is no loss of mass in the limit. 

\begin{corollary}\label{thm:unit}
Let $(X_n,q_n)\in Q^1\mathcal{M}_{g}$ be  a divergent sequence of quadratic differentials  such that $(X_n, q_n) \to (X_\infty, q_\infty)$.
If $m(X_\infty, q_\infty)=1$, then $B(X_n, q_n) \to \infty$.
\end{corollary}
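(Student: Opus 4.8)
The plan is to deduce Corollary~\ref{thm:unit} directly from Theorem~\ref{thm:limit}, by showing that the hypothesis $m(X_\infty,q_\infty)=1$ rules out the exceptional case in that theorem. More precisely, I will establish that a \emph{regular} (sub)sequence necessarily loses a definite amount of flat mass in the limit; since by assumption no mass is lost, the sequence can have no regular subsequence, and Theorem~\ref{thm:limit} then forces $B(X_n,q_n)\to\infty$.

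Here is the structure. Suppose, towards a contradiction, that $B(X_n,q_n)\not\to\infty$. By Theorem~\ref{thm:limit} the sequence has a regular subsequence, which still converges to the same limit $(X_\infty,q_\infty)$; after relabelling we may assume the whole sequence is regular, with pinching curves $\alpha_1,\dots,\alpha_k$ and associated flat cylinders $C_1,\dots,C_k$ with $m(C_j,q_n)>\delta$. Since the sequence is divergent, $k\ge 1$, and each $C_j$ is a genuine (non-degenerate) flat cylinder, as otherwise $m(C_j,q_n)=0$. Focus on $\alpha_1$. Because $\ell_{X_n}(\alpha_1)\to 0$ in the hyperbolic metric, the extremal length $\Ext_{X_n}(\alpha_1)\to 0$ (the standard comparison of extremal and hyperbolic length); and since $|q_n|^{1/2}$ is a metric of unit area, $\ell_{q_n}(\alpha_1)^2\le \Ext_{X_n}(\alpha_1)\to 0$. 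Thus the flat circumference $w_n:=\ell_{q_n}(\alpha_1)$ of the core of $C_1$ tends to $0$, while its height $h_n=m(C_1,q_n)/w_n>\delta/w_n\to\infty$. So $C_1$ is a flat cylinder of vanishing circumference and diverging height.

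The final step is the mass count: such a cylinder is pinched off onto a node in the limit and carries its (at least $\delta$) mass away. Concretely, for any compact $K\subset X_\infty$ disjoint from the nodes, the quadratic differentials converge on $K$, the corresponding subset $K_n\subset X_n$ is eventually disjoint from $C_1$ (since the core of $C_1$ is hyperbolically short, by the collar lemma $C_1$ lies essentially inside the collar of $\alpha_1$, which degenerates to the node missed by $K$), and hence $m(K,q_\infty)=\lim_n m(K_n,q_n)\le \liminf_n\bigl(1-m(C_1,q_n)\bigr)\le 1-\delta$. Letting $K$ exhaust $X_\infty$ minus its (finitely many) nodes gives $m(X_\infty,q_\infty)\le 1-\delta<1$, contradicting the hypothesis $m(X_\infty,q_\infty)=1$. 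Therefore $B(X_n,q_n)\to\infty$.

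The step I expect to be the main obstacle is the last one: turning ``the degenerating cylinder $C_1$ escapes into the node'' into the clean inequality $m(X_\infty,q_\infty)\le 1-\delta$. This requires pinning down the precise sense in which $(X_n,q_n)\to(X_\infty,q_\infty)$ (locally uniform convergence of the differentials away from the nodes, with at worst simple poles at the nodes so that $m(X_\infty,q_\infty)$ is finite), the lower semicontinuity of total mass under this convergence, and the fact that a high-modulus flat cylinder homotopic to a pinched curve is eventually separated from every fixed compact piece of the limit. If the earlier sections already record the behaviour of the thick--thin pieces and of mass under degeneration (as the discussion of ``loss of mass'' preceding Theorem~\ref{thm:limit} suggests), this is short; otherwise it is where the genuine work lies. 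The earlier steps---reducing to a regular subsequence via Theorem~\ref{thm:limit}, and showing $\ell_{q_n}(\alpha_1)\to0$ via extremal length---are routine.
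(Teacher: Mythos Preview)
Your proposal is correct and follows essentially the same approach as the paper: both hinge on the observation that the mass carried by a flat cylinder homotopic to a pinching curve is lost in the Deligne--Mumford limit, so $m(X_\infty,q_\infty)=1$ forces $m(C_j,q_n)\to 0$. The only packaging difference is that the paper appeals directly to the explicit lower bound for $B(X_n,q_n)$ obtained inside the proof of Theorem~\ref{thm:limit} (since $m(C_j,q_n)+\ell_{q_n}(\alpha_j)^2\to 0$ makes that bound blow up), whereas you invoke Theorem~\ref{thm:limit} as a black-box dichotomy and contradict regularity; your version also spells out the ``cylinder escapes into the node'' step that the paper asserts in one line.
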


On a hyperbolic surface $X$, when a simple closed geodesic $\gamma$ has small length say $\ell_X(\gamma) = \epsilon$,
there exists a collar neighborhood of $\gamma$ with width comparable to $\log \frac{1}{\epsilon}$. Unlike hyperbolic metrics, there is no standard collar lemma for quadratic differentials. However, there is another version of collar lemma due to Minsky and developed by Rafi.
They show that either there is a maximal flat cylinder containing $\gamma$ or
 an \emph{expanding annulus} containing $\gamma$ as a boundary component, with conformal modulus comparable to
$1/\epsilon$.
Using the collar lemma for quadratic differentials, we prove 
 
\begin{theorem} \label{thm:int} 
Given any $0<p<1$, 
the function $B(X,q)$ is $p$-integrable on $Q^1\mathcal{M}_{g}$ with respect to the Masur-Veech measure. 
\end{theorem}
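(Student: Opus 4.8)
The plan is to reduce to a neighbourhood of the boundary of $Q^1\mathcal{M}_{g}$, bound $B(X,q)$ there in terms of the degenerating flat geometry, and integrate against the local structure of the Masur--Veech measure $\mu_{\mathrm{MV}}$; the point will be that $p<1$ is exactly the exponent making the resulting borderline integrals converge. Since $\mu_{\mathrm{MV}}$ has finite total mass on $Q^1\mathcal{M}_{g}$ and $B$ is bounded on compact subsets (because $\ell_q(\lambda)$ is bounded below by a positive multiple of any fixed norm on $\mathcal{ML}_g$, uniformly as $(X,q)$ ranges over a compact set), it suffices to bound $\int_{\mathcal V_\epsilon} B^p\, d\mu_{\mathrm{MV}}$, where $\mathcal V_\epsilon=\{(X,q):\mathrm{sys}(X)<\epsilon\}$. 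For $\epsilon$ small I would partition $\mathcal V_\epsilon$ into finitely many pieces $\mathcal V_\tau$, one for each combinatorial type $\tau$ of the thick--thin decomposition: the short multicurve $\alpha_1,\dots,\alpha_k$, which $\alpha_j$ lies in a flat cylinder $C_j$ versus an expanding annulus, which complementary pieces $\Omega_1,\dots,\Omega_\ell$ are pants versus stable, and the gluing pattern. The whole argument proceeds by induction on the complexity $3g-3$, allowing subsurfaces with boundary; the base case is a pair of pants, where $\mathcal{ML}$ is parametrised by three boundary weights and $B$ is manifestly finite.

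On each $\mathcal V_\tau$ I would bound $B(X,q)$ from above using the Minsky--Rafi collar lemma. For each short curve let $A_j$ be its collar: either a maximal flat cylinder of circumference $c_j=\ell_q(\alpha_j)$, height $h_j$ and area $m(C_j,q)=c_jh_j$, or an expanding annulus of modulus $\asymp\Ext_X(\alpha_j)^{-1}\asymp\ell_X(\alpha_j)^{-1}$; write $w_j$ for its flat width. Any $\lambda\in U(X,q)$ satisfies $w_j\,i(\lambda,\alpha_j)\lesssim\ell_q(\lambda)\le1$ (crossing $A_j$ costs at least $w_j$ per strand), while a Dehn twist about $\alpha_j$ changes the $q$--length of a lamination by at most $\ell_q(\alpha_j)\,i(\lambda,\alpha_j)$, so the twist coordinate about $\alpha_j$ is $\lesssim c_j^{-1}$. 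Each stable $\Omega_i$, rescaled to area $m(\Omega_i,q)$, has bounded flat geometry, hence a pants decomposition by curves of flat length $\lesssim m(\Omega_i,q)^{1/2}$, and these control the remaining Dehn--Thurston coordinates of a length--$\le1$ lamination supported in $\Omega_i$ up to the volume of the corresponding unit ball on $\Omega_i$. Writing $\mu_{\Th}$ in Dehn--Thurston coordinates adapted to $\alpha_1,\dots,\alpha_k$ and these pants curves, I expect a bound of the shape
$$B(X,q)\ \le\ C_\tau \prod_{j=1}^{k}\Psi_j \cdot \prod_{i=1}^{\ell} m(\Omega_i,q)^{-d_i/2}\, B\!\Big(\Omega_i,\, q|_{\Omega_i}/m(\Omega_i,q)\Big),$$
where $d_i=\dim\mathcal{ML}(\Omega_i)$, $C_\tau$ depends only on $\tau$, pants components contribute only a constant, and $\Psi_j\asymp(w_jc_j)^{-1}$ is the two--coordinate volume around $\alpha_j$ --- of order $m(C_j,q)^{-1}$ in the flat--cylinder case and bounded by a fixed power of $c_j^{-1}$ (equivalently of $\ell_X(\alpha_j)^{-1}$) in the expanding case, via Rafi's description of expanding annuli.

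Then I would integrate, using the standard local picture of $\mu_{\mathrm{MV}}$ near the boundary (the one underlying the finiteness of Masur--Veech volume): near the stratum of type $\tau$ the measure factorises, up to a bounded constant, as the product of the Masur--Veech measures on the moduli of the $\Omega_i$ (of strictly smaller complexity), the Lebesgue measures in the circumference and twist of each $\alpha_j$, and a smooth density on the mass simplex $\{\sum_j m(C_j,q)+\sum_i m(\Omega_i,q)=1\}$; in particular $\mu_{\mathrm{MV}}(\{c_j<\delta\})\asymp\delta^2$, so the density in $c_j$ is $\asymp c_j\,dc_j$. Raising the previous bound to the power $p$, the $c_j$--integrals become $\asymp\int_0^\epsilon c_j^{-2p}\,c_j\,dc_j$, which converge precisely because $p<1$ (and would diverge at $p=1$); after matching the rescaling Jacobians against the mass--simplex density the $m(\Omega_i,q)$--integrals take the form $\int_0^1 m^{-d_ip/2}\,m^{e_i}\,dm<\infty$ for the relevant exponents $e_i$ and all $p<1$; and $\int B(\Omega_i,\cdot)^p\, d\mu_{\mathrm{MV}}^{\Omega_i}<\infty$ by the inductive hypothesis. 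This gives $\int_{\mathcal V_\tau} B^p\, d\mu_{\mathrm{MV}}<\infty$, and summing over the finitely many $\tau$ completes the proof.

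The hard part will be the upper bound above: producing an estimate whose dependence on each short curve is sharp enough that $2p<2$ is the exact threshold, which requires combining the intersection--number and twist bounds correctly through the collar and, in the expanding--annulus case, a genuine flat--width estimate in the spirit of Rafi's thick--thin analysis rather than a modulus bound alone. A secondary obstacle is the bookkeeping in the integration step --- tracking the mass--simplex density and the various rescaling Jacobians so that every $m(\Omega_i,q)$--integral converges for all $p<1$ simultaneously --- together with organising the induction so that pairs of pants and the low--complexity base strata are absorbed uniformly.
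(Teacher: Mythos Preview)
Your core insight---that the collar lemma contributes a factor $\asymp c_j^{-2}$ to $B(X,q)$ per short curve while the Masur--Veech density in $c_j$ is $\asymp c_j\,dc_j$, so that $\int_0^\epsilon c_j^{-2p}c_j\,dc_j$ converges exactly for $p<1$---is the same as the paper's. The execution, however, is quite different, and the paper's route is considerably shorter: it avoids both the induction on subsurfaces and the factorisation of $\mu_{\mathrm{MV}}$.

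Rather than treating only the hyperbolically short curves $\alpha_1,\dots,\alpha_k$ and then recursing on the stable pieces $\Omega_i$, the paper extends the short curves to a full Bers pants decomposition $\{\gamma_1,\dots,\gamma_{3g-3}\}$ and applies the width bound $w_j\succeq \ell_q(\gamma_j)/\ell_X(\gamma_j)$ (from Lemma~\ref{lem:annulus}) to \emph{all} $3g-3$ curves simultaneously. This gives a closed upper bound $B(X,q)\le C\prod_{j=1}^{3g-3}\ell_X(\gamma_j)/\ell_q(\gamma_j)^2$ (Theorem~\ref{thm:upper}); since each $\ell_X(\gamma_j)\le B_g$, one gets $B^p=O\bigl(\prod_j \ell_q(\gamma_j)^{-2p}\bigr)$ with no residual subsurface factor $B(\Omega_i,\cdot)$ at all. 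For the integration, instead of disintegrating $\mu_{\mathrm{MV}}$ along a mass simplex, the paper restricts to the principal stratum, chooses in each $q$-geodesic $\gamma_j$ a saddle connection $\beta_j$ so that $\ell_q(\gamma_j)\ge|\beta_j|$, arranges that $\beta_1,\dots,\beta_{3g-3}$ are independent period coordinates, and bounds the integral by $\prod_{j}\int_0^1\!\int_0^1 (x_j^2+y_j^2)^{-p}\,dx_j\,dy_j$, finite for $p<1$.

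Your inductive scheme may be salvageable, but the two points you yourself flag as obstacles are real obligations: the factorisation of $\mu_{\mathrm{MV}}$ into Masur--Veech measures on the $\Omega_i$, Lebesgue measures in $(c_j,\text{twist}_j)$, and a smooth density on the mass simplex is not standard in this precise form and would need a careful argument; and the convergence of $\int_0^1 m^{-d_ip/2}m^{e_i}\,dm$ for all $p<1$ requires you to actually identify $e_i$ and check $e_i\ge d_i/2-1$, which you have not done. The paper's approach buys you freedom from both issues at the cost of a slightly cruder upper bound (it discards the helpful factor $\ell_X(\gamma_j)$), which is also why the paper cannot push to $p=1$.
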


Unfortunately, we are not able show that $B(X,q)$ is integrable. 

\subsection*{Outline of the paper}
In Section \ref{sec:pre}, we present some background materials on measured laminations and the Dehn-Thurston parametrization for  integral measured laminations. In Section \ref{sec:limit}, we first recall the Deligne-Mumford compactifications of the space of quadratic differentials. We apply the thick-thin decomposition of quadratic differential to give a lower bound of $B(X,q)$
and prove Theorem \ref{thm:limit}. In Section \ref{sec:collar},
we establish an upper bound of $B(X,q)$, which is base on the collar lemma of quadratic differentials due to Minsky and Rafi. 
We prove Theorem \ref{thm:int} using the upper bound estimate. 

\subsection*{Notation} We write $f_2 = O(f_1)$ or $f_2 \preceq f_1$ to mean $f_2 \leq C f_1$, and 
$f_1 \asymp f_2$ to mean $f_1/C \leq f_2 \leq C f_1$, where $C>0$ is some constant that
depends only on the genus of the surface.

\bigskip
\noindent
{\bf Acknowledgements.} We would like to thank Dawei Chen and Huiping Pan for their interests in this work and useful conversations.

\section{Preliminaries}\label{sec:pre}

In this section, we review the notions of measured laminations and train track coordinates. For further background, see \cite{PH,ES}. 

Under the Dehn-Thurston coordinates,
 integral multi-curves of $\mathcal{ML}_g$
are analogues of integral points in Euclidean space, and the function $B(X,q)$ measures the growth rate of 
integral multi-curves. See Proposition \ref{lem:limit} below. 
Thus the estimate of $B(X,q)$ can be reduced to length estimates of simple closed geodesics 
on the flat surface. 

\subsection{Teichm\"uller space and moduli space}
Let $S=S_g$ be an oriented closed surface of genus $g\geq 2$.
Let $\mathcal{T}_g$ be the Teichm\"uller space of marked hyperbolic metrics on $S$. Given a simple closed curve $\gamma$ on $S$, we denote by $\ell_X(\gamma)$ 
the geodesic length of $\gamma$ on $X\in \mathcal{T}_g$. 
Let $\operatorname{Mod}_g$ be the mapping class group of $S$. The group $\operatorname{Mod}_g$ acts on $\mathcal{T}_g$ and the quotient 
$\mathcal{M}_g = \mathcal{T}_g / \operatorname{Mod}_g$ is the moduli space of Riemann surfaces of genus $g$. 

Given $X\in \mathcal{T}_g$, the cotangent space of $\mathcal{T}_g$ at $X$ is naturally identified with $Q(X)$, i.e., the space of holomorphic quadratic differentials on $X$. Let $Q\mathcal{T}_g$ be the cotangent bundle and set
$Q\mathcal{M}_g =  Q\mathcal{T}_g / \operatorname{Mod}_g$.
Denote the subset of $Q\mathcal{M}_g$ consisting of quadratic differentials of unit area by  $Q^1\mathcal{M}_g$.

Let $\Pg$ be the principal stratum of $Q^1\mathcal{M}_{g}$, 
which consists of quadratic differentials with $4g-4$ simple zeros. It is known that $\Pg$ is a dense set of full measure (with
 respect to Lebesgue measure class).

\subsection{Measured laminations}
 Endow $S$ with a hyperbolic metric. A \emph{geodesic lamination} on $S$ is a closed subset of $S$ which is a disjoint union of simple geodesics. A \emph{measured lamination} $\lambda$ is a geodesic lamination (called the support of $\lambda$) that carries a transverse invariant measure. Denote the space of measured laminations on $S$ by $\ML$. 
The space $\ML$ turns out to be a  finite
dimensional manifold, homeomorphic to  $\mathbb{R}^{6g-6}$.

Weighted simple closed geodesics are dense in $\ML$. 
If $\gamma=\sum a_i \gamma_i \in \ML$, where $a_i\in \mathbb{N}$ and $\gamma_i$'s are pairwise disjoint simple closed geodesics (no two of which are homotopic), then we call $\gamma$ an \emph{integral multi-curve}. 
Let $\ML(\mathbb{Z})$ be the set of integral multi-curves in $\ML$.
It is an analogue of integral points in $\mathbb{R}^{6g-6}$. 

There is a piecewise linear structure on $\ML$ given by train track coordinates. 
Recall that a \emph{train track} is an embedded $1$-complex $\tau \subset S$
 whose edges (called \emph{branches}) are smooth arcs with well-defined tangent vectors at
 the vertices.  At any vertex (called a \emph{switch}) the incident edges are mutually
 tangent. We shall assume that every switch is trivalent and  every
 complementary region  of $\tau$ is a disc, either with three cusps at
 the boundary or with one cusp
 at the boundary. Moreover, we assume that the train track is
\emph{recurrent} in the sense that it admits a transverse measure which is positive on every branch.
A train track satisfying with above properties is called \emph{complete}. 

For a complete train track $\tau$, the set of weights on $\tau$ satisfying the switch conditions is an open convex cone of $\mathbb{R}^{6g-6}$. It is corresponding to an open set $U(\tau)\subset \ML$,
where integral weights in the train track coordinates are correspondence with integral multi-curves in $U(\tau)$. We say that every $\lambda\in U(\tau)$ is carried by $\tau$. It turns out that the Euclidean measure of $\mathbb{R}^{6g-6}$ induces a volume form $\mu_{\operatorname{Th}}$ on $U(\tau)$, which is called the \textit{Thurston measure}. 
The transition maps between different train track coordinates are linear and preserve integral points.
As a consequence, the Thurston measure is well-defined on $\ML$ and it is  mapping class group invariant.

For $\lambda\in U(\tau)$, we write $\lambda = \lambda_1\oplus_\tau \lambda_2$
if the train track coordinates of $\lambda$ is the sum of those of $\lambda_1$ and $\lambda_2$. 

Let $(X,q)$ be a holomorphic quadratic differential. Consider the length function $\ell_q$ on $\ML$.
The next lemma shows that $\ell_q$ is convex on $U(\tau)$. 

\begin{lemma}\label{lem:convex}  
For any $\lambda_1, \lambda_2\in U(\tau)$, we have 
$$\ell_q(\lambda_1\oplus_\tau \lambda_2) \leq \ell_q(\lambda_1)+ \ell_q(\lambda_2).$$
\end{lemma}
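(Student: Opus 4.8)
The plan is to reduce to integral multi‑curves, realize $\lambda_1\oplus_\tau\lambda_2$ by overlaying the $q$-geodesic representatives of $\lambda_1$ and $\lambda_2$ and resolving their crossings in the way prescribed by $\tau$, and then use that resolving a crossing of two geodesic arcs never increases length.

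First I would reduce to the case where $\lambda_1$, $\lambda_2$, and hence $\lambda_1\oplus_\tau\lambda_2$, are integral multi‑curves carried by $\tau$. This is legitimate: $\ell_q$ is continuous on $\ML$ and satisfies $\ell_q(t\lambda)=t\,\ell_q(\lambda)$ for $t>0$, and the weight vector of $\lambda_1\oplus_\tau\lambda_2$ is by definition the sum of those of $\lambda_1$ and $\lambda_2$, so one can approximate $\lambda_1,\lambda_2$ by rational points of $U(\tau)$, clear denominators, and pass to the limit.

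Now let $g_1,g_2$ be the $q$-geodesic representatives of the integral multi‑curves $\lambda_1,\lambda_2$. Since geodesic representatives realize the geometric intersection number, $g_1\cup g_2$ has finitely many double points; after a small perturbation one may assume they are transverse and miss the zeros of $q$ (a crossing at a zero is handled the same way). At each double point, smooth the crossing in the unique way compatible with the carrying by $\tau$ --- so that the two arcs emerge running coherently along a single branch --- and let $c$ be the resulting embedded multi‑curve. By the standard description of addition in train‑track coordinates (two multi‑curves carried by a common track, put in carried position in a tie neighborhood, merge after coherently resolving all their crossings into the carried multi‑curve whose branch weights are the termwise sums), the curve $c$ is freely homotopic to $\lambda_1\oplus_\tau\lambda_2$; here one uses that $g_1,g_2$ are isotopic to curves in carried position and that the resolution depends only on the isotopy class of the pair $g_1\cup g_2$. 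Finally, each smoothing replaces, inside a small disk about the crossing point $P$, two geodesic chords through $P$ by two geodesic chords with the same four endpoints that avoid $P$, and the triangle inequality for the flat metric $|q|^{1/2}$ shows the total length does not increase. Therefore $\ell_q(c)\le\ell_q(g_1)+\ell_q(g_2)$, and since $\ell_q(\lambda_1\oplus_\tau\lambda_2)$ is the infimum of the $q$-lengths of the curves representing it, $\ell_q(\lambda_1\oplus_\tau\lambda_2)\le\ell_q(c)\le\ell_q(\lambda_1)+\ell_q(\lambda_2)$.

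I expect the main obstacle to be the bookkeeping in the previous paragraph: making ``the unique $\tau$-compatible smoothing'' precise and checking that performing all of them at once produces exactly $\lambda_1\oplus_\tau\lambda_2$. This is standard in spirit but needs care with carried positions and with degenerate configurations, such as $g_1$ and $g_2$ sharing saddle connections or passing through common zeros of $q$. An alternative is to invoke the Duchin--Leininger--Rafi current $L_q$ and the identity $\ell_q(\cdot)=i(L_q,\cdot)$, which reduces the lemma to the convexity of $\mu\mapsto i(L_q,\mu)$ in the linear coordinates on $U(\tau)$; but that convexity seems to require the same surgery, so I would carry out the direct argument above.
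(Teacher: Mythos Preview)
Your argument is correct, but it is a genuinely different route from the paper's.  The paper's proof is two lines: it cites \cite[Theorem 5.1]{EskinAlex2022Ecos}, where the inequality is proved for Riemannian metrics of negative curvature, and observes that the same proof goes through for the flat metric $|q|^{1/2}$ because, by Duchin--Leininger--Rafi, this metric is realized by a geodesic current.  In other words the paper takes precisely the alternative you mention at the end --- reduce to convexity of $\mu\mapsto i(L_q,\mu)$ on $U(\tau)$ --- except that it outsources that convexity to an existing reference rather than redoing the surgery.

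Your direct cut-and-paste argument has the advantage of being self-contained and makes transparent \emph{why} the inequality holds (triangle inequality at each resolved crossing), at the cost of the bookkeeping you flag: identifying the $\tau$-compatible smoothing and verifying it produces $\lambda_1\oplus_\tau\lambda_2$, plus handling shared saddle connections.  The paper's route trades that bookkeeping for a black-box citation; either is acceptable here, and in fact the cited theorem is proved by essentially the same surgery you describe.
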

\begin{proof}
The lemma holds when $\rho$ is a Riemannian metric of negative curvature,
as shown by \cite[Theorem 5.1]{EskinAlex2022Ecos}. 
Their proof can be adapted to the flat metric induced by $q$,
because the metric can be realized as a geodesic current.
\end{proof}

It follows from Lemma \ref{lem:convex} that the unit ball $U(X,q)$ is locally convex. As a consequence, we have 

\begin{proposition}\label{lem:limit}
  For any quadratic differential $(X,q)$, we have 
  $$B(X,q)=\lim_{L\to\infty} \frac{\# \{\gamma \in \ML(\mathbb{Z}) : \ell_q(\gamma) \leq L\}}{L^{6g-6}}.$$
\end{proposition}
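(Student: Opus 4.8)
The plan is to prove the proposition by a lattice-point-counting argument, exploiting the piecewise linear integral structure of $\ML$ together with the local convexity of the unit ball $U(X,q)$ that follows from Lemma \ref{lem:convex}. First I would recall that $\ML$ is covered by finitely many train track charts $U(\tau_1), \dots, U(\tau_N)$ (up to the mapping class group action, but in fact finitely many suffice to cover $\ML$ since the complement of the top-dimensional cones has measure zero and the relevant counting is insensitive to it), and that in each chart $U(\tau_i)$ the coordinate map identifies $\ML(\Z) \cap U(\tau_i)$ with the integral points of an open convex cone in $\R^{6g-6}$, while pushing the Thurston measure $\mu_{\Th}$ forward to Lebesgue measure. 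The key point is that $\ell_q$ is homogeneous of degree one, so $\{\gamma \in \ML(\Z) : \ell_q(\gamma) \le L\}$ is, in the coordinates on $U(\tau_i)$, the set of integral points in the dilate by $L$ of the region $U(X,q) \cap U(\tau_i)$.

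The main step is then a version of the classical principle that, for a region $\Omega \subset \R^{6g-6}$ of finite Lebesgue measure which is starshaped (or locally convex, as guaranteed here) and has suitably controlled boundary,
\[
\lim_{L \to \infty} \frac{\#\big( \Z^{6g-6} \cap L\,\Omega \big)}{L^{6g-6}} = \operatorname{Leb}(\Omega).
\]
For this I would verify two things: that $U(X,q)$ has finite Thurston measure, and that its boundary has measure zero so the lattice-point count does not see it. Finiteness of $\mu_{\Th}(U(X,q))$ follows because $\ell_q$ is a filling geodesic current length function, hence positive and proper on $\ML \setminus \{0\}$: the function $\ell_q(\lambda)/\mu_{\Th}$-homogeneity gives that $U(X,q)$ is contained in a bounded region of each chart. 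The boundary $\partial U(X,q) = \{\ell_q(\lambda) = 1\}$ has measure zero because $\ell_q$ is convex on each chart (Lemma \ref{lem:convex}) and not identically constant, so its level set is a Lipschitz hypersurface. One then sums the contributions over the finitely many charts, checking that the overlaps and the lower-dimensional strata contribute nothing in the limit, to obtain $B(X,q) = \mu_{\Th}(U(X,q))$ on one side and the counting limit on the other.

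I expect the main obstacle to be the bookkeeping at the boundaries of the train track charts: $\ML$ is not a single convex cone but a union of such cones glued along lower-dimensional faces, so the region $U(X,q)$ is only \emph{locally} convex, and a naive application of a convex-body lattice-point theorem does not apply globally. The resolution is to work chart by chart, use the fact that each $U(\tau_i) \cap U(X,q)$ is genuinely convex (being the intersection of the convex cone $U(\tau_i)$ with a locally convex body, after possibly refining to smaller convex subsets), apply the standard lattice counting there, and then argue that $\ML = \bigcup U(\tau_i)$ up to a measure-zero set whose $L$-dilates contain $o(L^{6g-6})$ integral multi-curves. A secondary technical point is confirming that the Dehn–Thurston / train track integral structure makes $\ML(\Z)$ exactly the preimage of $\Z^{6g-6}$ under each chart map (with the transition maps being integral linear isomorphisms), which is precisely the content recalled in Section \ref{sec:pre}; with that in hand the argument is the routine Minkowski-type limit. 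Finally, one notes this is a special case of the Erlandsson–Souto counting result \cite[Theorem 1.2]{ES} applied to the identity element of $\operatorname{Mod}_g$ acting on the "curve" being all of $\ML(\Z)$, which could be cited to shorten the proof, but the self-contained argument above is cleaner for the reader.
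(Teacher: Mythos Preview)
Your proposal is correct and is essentially the same approach the paper takes: the paper's proof simply invokes Lemma \ref{lem:convex} and refers the reader to Mirzakhani \cite[Proposition 3.1]{Mir2008}, whose argument is exactly the chart-by-chart lattice-point count you have written out in detail. In other words, you have unpacked what the paper leaves as a citation; the key inputs (finitely many integral linear charts on $\ML$, homogeneity of $\ell_q$, local convexity of $U(X,q)$ from Lemma \ref{lem:convex}, and the standard Minkowski-type limit) are the same.
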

\begin{proof}
  This is a corollary of Lemma \ref{lem:convex}.
Compare with \cite[Proposition 3.1]{Mir2008}.
\end{proof}

\subsection{Dehn-Thurston coordinates}

Let  $\mathcal{P}=\{\alpha_1, \cdots, \alpha_{3g-3}\}$  be a pants decomposition of  $S$. 
Associated to $\mathcal{P}$ is a parametrization of $\ML(\mathbb{Z})$ called \emph{Dehn-Thurston coordinates}. Roughly speaking, the Dehn-Thurston coordinates
 for an integral multi-curve $\gamma$ consist of
 \begin{itemize}
   \item the length parameters $i(\gamma, \alpha_j)$, which is intersection  number
   of $\gamma$ with each $\alpha_j$, and,
   \item the twist parameters $\operatorname{tw}(\gamma, \alpha_j)$, which measures the twist of $\gamma$ around each $\alpha_j$.
 \end{itemize}
See \cite{PH} for details. 
For any $\gamma\in \ML(\mathbb{Z})$, we
denote its Dehn-Thurston coordinates by
\begin{eqnarray*}
 \left( n_j, t_j\right)_{j=1}^{3g-3} \in \left( \mathbb{Z}_{\geq 0} \times \mathbb{Z} \right)^{3g-3},
\end{eqnarray*}
where $n_j=i(\gamma, \alpha_j)$ is the intersection number of $\gamma$ with $\alpha_j$,
and $t_j=\operatorname{tw}(\gamma, \alpha_j)$ is the twist number of $\gamma$ around $\alpha_j$.

\section{Lower bounds and limits}\label{sec:limit}

In this section we prove Theorem \ref{thm:limit}. 
The proof is similar to that of \cite[Proposition 3.6]{Mir2008},
by relating $\ell_q(\gamma)$ (where $\gamma$ is an integral multi-curve)
to certain combinatorial length of $\gamma$. 

\subsection{An uniform lower bound for $B(X,q)$.}
Let $X$ be a Riemann surface and let $\gamma$ be a simple closed curve $\gamma$ on $X$. For any conformal metric $\rho$ on $X$, we denote by $L_\rho(\gamma)$ the infimum of the $\rho$-length of closed curves in the homotopy class of $\gamma$.
The extremal length of  $\gamma$ on $X$ is defined by 
$$\operatorname{Ext}_X(\gamma) = \sup_\rho L^2_\rho(\gamma),$$
where $\rho$ is taken over all conformal metric on $X$ of unit area.  The extremal length of simple closed curves extends continuously to measured laminations (foliations) \cite[Proposition 3]{Kerckhoff}.

By taking $\rho=|q|^{1/2}$, we have
$$\ell_q(\gamma) \leq \sqrt{\operatorname{Ext}_X(\gamma)}.$$
This gives
\begin{eqnarray*}
  B(X, q) &\geq &   \Lambda(X) = \mu_{\Th} \left(\{\lambda\in \ML :  \sqrt{\operatorname{Ext}_X(\gamma)} \leq 1\}\right).
\end{eqnarray*}
The function $\Lambda(X)$ appears in \cite{ABEM}, which is called the \textit{Hubbard-Masur function}.
Dumas \cite[Theorem 5.19]{Dumas} proved that it is a constant on $\mathcal{M}_g$.

By an inequality of Maskit \cite{Maskit}, for any simple closed curve $\gamma$
on $X$, we have $$\operatorname{Ext}_X(\gamma) \leq \frac{\ell_X(\gamma)}{2}  e^{\ell_X(\gamma)/2}.$$ Since $\ell_{q}(\gamma) \leq \sqrt{\operatorname{Ext}_{X}(\gamma)}$, we have $$\ell_{q}(\gamma) =O\left( \sqrt{\ell_{X}(\gamma)} \right).$$

\subsection{Thick-thin decomposition}

Next we describe the Deligne-Mumford compactification of quadratic differentials on Riemann surfaces. 

Fix a sufficiently small constant $\epsilon>0$. 
Let $(X,q)$ be a quadratic differential in  $ Q\mathcal{M}_g$.
Assume that there is a maximal collection of disjoint simple closed curves 
$\{\alpha_1,\ldots,\alpha_k\}$ 
such that $\ell_{X}(\alpha_j) < \epsilon$ for all $1\leq j\leq k$. Therefore, $\ell_{X}(\gamma) \geq \epsilon$ for any simple closed curve $\gamma$
 that is not contained in $\{\alpha_1, \cdots, \alpha_k\}$.

We can represent each $\alpha_j$ on $X$ as a geodesic of $q$. There are two possibilities: 
\begin{itemize}
\item[(i)]  $\alpha_j$ is contained in a maximal flat cylinder of $q$, denoted by
$C_j$. 
Then
the circumference of $C_{j}$ is equal to $\ell_{q}(\alpha_j)$. 
  Let $h_{q}(\alpha_j)$ be the height of $C_{j}$.  
  The area of $C_{j}$ is equal to 
  $ \ell_{q}(\alpha_j)h_{q}(\alpha_j).$
\item[(ii)] The other case is that there is no flat cylinder of $q$ in the homotopic class of $\alpha_j$. In this case, the geodesic representation of 
    $\alpha_j$ is unique, consisting of a finite concatenation of saddle connections. 
   By abuse of notation, 
we say that $\alpha_j$ is contained in a degenerate flat cylinder $C_{j}$ with height $h_{q}(\alpha_j)=0$.
\end{itemize}

By removing all the cylinders $C_{j}$ ($j=1,\cdots, k$) from $X$,  the complement is a union of subsurfaces $\Omega_1, \cdots, \Omega_\ell$ bounded by the geodesic representation of $\{\alpha_j\}_{j=1}^k$. We write
$$(X, q) = \left( \bigcup_{j=1}^k C_j \right) \bigcup  \left( \bigcup_{j=1}^\ell \Omega_j \right),$$
which is called the \emph{thick-thin decomposition} of $(X, q)$. 
Each $\Omega_j$ is called a \emph{thick component} of $(X, q)$.
A thick component  $\Omega_j$ is called \emph{stable} if it is  not homeomorphic to a pair of pants.

\subsection{Limits of  quadratic differentials} 
Let $(X_n, q_n)$ be a divergent sequence of quadratic differentials in $ Q^1\mathcal{M}_g$.
We assume that
$X_n$ converges to $X_\infty$ in the Deligne-Mumford compactification, 
where $X_\infty$ denotes a noded Riemann surface.
By definition, there is a maximal collection of disjoint simple closed curves 
$\{\alpha_1,\ldots,\alpha_k\}$ 
such that $\ell_{X_n}(\alpha_j) \to 0$ for all $1\leq j\leq k$. So for $n$ sufficiently large we have $\ell_{X}(\alpha_j) < \epsilon$ for all $1\leq j\leq k$,
and $\ell_{X_n}(\gamma) > \epsilon$ for any simple closed curve $\gamma$
 that is not contained in $\{\alpha_1, \cdots, \alpha_k\}$.

Up to a subsequence, we may assume that $q_n$ converges to  
a quadratic differential $q_\infty$ on $X_\infty$, with at most simple poles at the punctures. The convergence can be described as follows. 
Let $U$ be any sufficiently small neighborhood of the punctures of $X_\infty$. 
Then for $n$ sufficiently large there is a conformal embedding $F_n : X_\infty \setminus U \to X_n$ such that $F_n^* q_n \to q_\infty$ as $n\to \infty$ uniformly on $X_\infty \setminus U$. Note that the limited quadratic differential $q_\infty$ may be vanishing on some component of $X_\infty$. In particular, $q_\infty \equiv 0$
on every component of $X_\infty$ that is $3$-punctured sphere, since there is no non-zero meromorphic   quadratic differential of finite area on a  $3$-punctured sphere.

In the following discussion, we will assume that the thick-thin decomposition of $(X_n, q_n)$ is given by
$$(X_n, q_n) = \left( \bigcup_{j=1}^k C_j \right) \bigcup  \left( \bigcup_{j=1}^\ell \Omega_j \right),$$
 In abbreviated notation, here we denote by $C_j$ and $\Omega_j$ be the cylinder components and thick components of $(X_n, q_n)$, without referring to the explicit quadratic differential $q_n$. We shall write $m(C_j, q_n)$ or $m(\Omega_j, q_n)$
 to be the $|q_n|$-area of a component.

\begin{lemma}\label{lem:stable}
Let $\Omega$ be a stable component in the thick-thin decomposition of $(X_n, q_n)$. 
Denote the restriction of $q_n$ on $\Omega$ by $(\Omega, q_n)$.
If $m(\Omega, q_n)$ is bounded away from $0$,
then by passing to a subsequence, 
$(\Omega, q_n)$ converges to a nonzero finite area quadratic differential
$(\Omega, \psi)$.
\end{lemma}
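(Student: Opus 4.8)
The underlying conformal structure and the differential are already essentially under control: by the convergence $F_n^*q_n\to q_\infty$ recorded above, restricting to the component $\Omega_\infty$ of $X_\infty$ onto which $\Omega$ converges produces a meromorphic quadratic differential $\psi:=q_\infty|_{\Omega_\infty}$ with at worst simple poles at the punctures, hence of finite area, and with $(\Omega,q_n)\to(\Omega_\infty,\psi)$ locally uniformly away from the punctures. The only content of the lemma is therefore that $\psi\not\equiv0$; note that this uses both hypotheses, since on a pair of pants the restriction of $q_\infty$ is forced to vanish, and without a lower mass bound all the mass of $\Omega$ could be lost into the cusps. So the plan is to assume $\psi\equiv0$ and contradict $m(\Omega,q_n)\ge\delta$.

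If $\psi\equiv0$ then $\int_{F_n(K)}|q_n|\to0$ for every compact $K$ avoiding the punctures of $\Omega_\infty$; equivalently, for a fixed compact core of $\Omega$ the $|q_n|$-mass tends to $0$, so the mass of $\Omega$ concentrates in arbitrarily thin collar-type neighborhoods of the pinching curves $\alpha_j$ bounding $\Omega$. To bound the mass there I would use the flat thick--thin decomposition of Minsky and Rafi for $(X_n,q_n)$. Since each $\alpha_j$ is flat-short (because $\ell_{q_n}(\alpha_j)\le\sqrt{\Ext_{X_n}(\alpha_j)}\to0$), this decomposition refines $X_n=\bigcup_j C_j\cup\bigcup_i\Omega_i$, so $\Omega$ is a union of boundedly many flat pieces: thick pieces $Y$, expanding annuli, and flat cylinders. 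A thick piece $Y$ of size $\lambda_Y$ (the flat length of its shortest essential non-peripheral closed curve) has $m(Y,q_n)\asymp\lambda_Y^2$; an expanding annulus $A$ abutting a thick piece $Y$ has outer boundary of flat length $\asymp\lambda_Y$, hence $m(A,q_n)=O(\lambda_Y^2)$ — and the expanding annulus lying between the interior of $\Omega$ and a boundary curve $\alpha_j$ is exactly of this form, because the flat cylinder $C_j$ has been removed; and a flat cylinder $C'$ contained in $\Omega$ is homotopic to a curve $\beta$ with $\ell_{X_n}(\beta)\ge\epsilon$, so $\operatorname{mod}(C')\le1/\Ext_{X_n}(\beta)=O(\epsilon^{-2})$ and $m(C',q_n)=O_\epsilon(\ell_{q_n}(\beta)^2)=O_\epsilon(\lambda_{Y'}^2)$ for an adjacent thick piece $Y'$. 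Summing over the pieces gives $m(\Omega,q_n)\le C(g,\epsilon)\bigl(\max_{Y\subset\Omega}\lambda_Y\bigr)^2$.

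To finish I must show that $\psi\equiv0$ forces $\max_{Y\subset\Omega}\lambda_Y\to0$. Since $\Omega$ is stable it has at least one (complexity $\ge1$) thick piece; for any thick piece $Y$ of $\Omega$, its core curves are essential and non-peripheral in $\Omega$, hence have $\ell_{X_n}$-length $\ge\epsilon$, so $Y$ lies in the image of $F_n$ and corresponds to a fixed subsurface $Y_\infty\subset\Omega_\infty$ on which $F_n^*q_n\to\psi$. If $\lambda_Y\ge\lambda_*>0$ along a subsequence, then $\int_{Y_\infty}|\psi|=\lim_n\int_{Y_\infty}|F_n^*q_n|\asymp\liminf_n\lambda_Y^2\ge c\,\lambda_*^2>0$, contradicting $\psi\equiv0$. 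Hence $\max_{Y\subset\Omega}\lambda_Y\to0$, so $m(\Omega,q_n)\to0$ by the previous estimate, contradicting $m(\Omega,q_n)\ge\delta$. This gives $\psi\not\equiv0$, and hence the lemma.

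The main obstacle is the area estimate $m(\Omega,q_n)\le C(g,\epsilon)\bigl(\max_{Y\subset\Omega}\lambda_Y\bigr)^2$: it requires going carefully through the Minsky--Rafi description of $(X_n,q_n)$ — in particular checking that every expanding annulus and every flat cylinder inside $\Omega$ is accounted for by an adjacent thick piece, and that the removed cylinders $C_j$ do not reappear — while keeping all implied constants uniform in $n$ and depending only on $g$ (and on the fixed $\epsilon$). The compactness arguments for the conformal structures and for the associated normal family are routine and are essentially already contained in the construction of the limit $(X_\infty,q_\infty)$ recalled before the statement.
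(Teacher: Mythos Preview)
Your approach is genuinely different from the paper's. The paper's proof is essentially a citation: it observes that on the subsurface $\Omega$ (with the conformal structure induced by $q_n$) no boundary curve $\alpha_j$ is homotopic to a flat cylinder---precisely because the maximal cylinders $C_j$ have been excised in forming $\Omega$---and then invokes Lenzhen--Masur \cite[Theorem~2]{LM2010}, which states exactly that under this ``no boundary cylinder'' hypothesis together with a mass lower bound, a subsequence converges to a nonzero finite-area differential. What you are doing is effectively reproving that special case of Lenzhen--Masur from scratch via the Minsky--Rafi geometric decomposition: if the limit vanished, mass would have to escape to the boundary, but the boundary regions of $\Omega$ are expanding annuli (not flat cylinders, since those were removed) with area quadratic in the size of the adjacent thick pieces, and those sizes are forced to zero by the vanishing on the compact core. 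The architecture is sound and buys you a self-contained argument, though a couple of steps are stated loosely: the flat thick--thin decomposition of $\Omega$ can change combinatorially with $n$, so a thick piece $Y$ does not literally correspond to a fixed $Y_\infty$; and a pants piece has no non-peripheral ``core curves'', so the containment of $Y$ in the image of $F_n$ needs a different justification (e.g.\ via area in the hyperbolic thick part). These are fixable---you rightly flag the area estimate as the delicate point---but since the target statement is already in the literature, the paper's citation is the more economical route.
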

\begin{proof}
 The restriction of each $q_n$ on $\Omega$ defines a Riemann surface structure on $\Omega$, denoted by $Y_n$.
Then the hyperbolic length of every essential closed curve on $Y_n$ (neither homotopic to a point nor  to a boundary component) is bounded below away from $0$. If $\alpha_j$ is a boundary component of $Y_n$,
then $\ell_{Y_n}(\alpha_j)$ can tend to $0$ or not. In either case, it is clear that 
there is no flat cylinder of $q_n$ in the homotopy class of $\alpha_j$. 

By Lenzhen-Masur \cite[Theorem 2]{LM2010}, if every boundary component of 
$(\Omega, q_n)$ is not homotopic to a flat cylinder 
and $m(\Omega, q_n)$ is bounded away from $0$,
then by passing to a subsequence $(\Omega, q_n)$ converges to a nonzero finite area quadratic differential $(Y_\infty, \psi)$. The convergence is in the sense that for any neighborhood $U$ of the possible punctures on $Y_\infty$,
for $n$ sufficiently large there exists a conformal mapping $F_n' : Y_\infty \setminus U \to Y_n$
such that $(F_n')^*q_n$ converges to $\psi$ uniformly.
\end{proof}

\begin{remark}
In general, we can scale $(\Omega, q_n)$ by the factor $1/ m(\Omega, q_n)$ to obtain a sequence of quadratic differentials with unit area on the subsurface. 
Then the proof of \cite[Theorem 2]{LM2010}
shows that the sequence of quadratic differentials (after scaling) converges to some $(\Omega, \psi)$ of unit area.
\end{remark}

\subsection{Length estimates}

In the next lemma, $C_j$ denotes a cylinder component of $(X_n,q_n)$.
\begin{lemma}\label{lem:spiral}
If $\beta$ is a saddle connection that intersects with $C_j$, then it spirals around $C_j$ at most once 
each time before entering into $C_j$.
\end{lemma}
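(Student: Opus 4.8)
The plan is to argue by contradiction using the area bound $m(X_n,q_n)=1$ together with the fact that $\beta$ is a \emph{saddle connection}, hence a geodesic segment, and in particular cannot intersect itself. Suppose $\beta$ spirals around $C_j$ at least twice on one of its passages through $C_j$. Develop the flat structure of $C_j$ in the universal cover: a flat cylinder of circumference $c_j = \ell_{q_n}(\alpha_j)$ and height $h_j = h_{q_n}(\alpha_j)$ lifts to an infinite Euclidean strip of width $h_j$, on which $\beta$ lifts to a straight segment making some angle with the core direction. Saying that $\beta$ ``spirals $N$ times'' means that this straight lift crosses $N$ consecutive fundamental domains of the $\Z$-translation by $c_j$ before exiting the strip; equivalently the horizontal displacement of the lift inside $C_j$ is at least $(N-1)c_j$ while its vertical displacement is at most $h_j$.

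First I would make precise what ``spirals at most once'' should mean and reduce to a purely planar statement about a geodesic segment in the strip $\R \times [0,h_j]$. The key geometric input is that a saddle connection is a straight line in the flat metric; so on the portion of $\beta$ lying in $C_j$ the slope is constant. If $\beta$ entered $C_j$ at one boundary circle and exited at the same boundary circle, an intermediate-value/monotonicity argument on the lift shows that in fact $\beta$ has a self-tangency or turns back, contradicting that it is geodesic — so $\beta$ must traverse $C_j$ from one boundary to the other. Once we know $\beta$ crosses the height $h_j$ monotonically, the number of times it winds is controlled by the slope, and the point is to bound that slope from the geometry of the \emph{rest} of the surface, not of $C_j$ itself.

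Next I would exploit that $\beta$ is a finite segment joining two zeros, both of which lie \emph{outside} the open cylinder $C_j$ (the zeros of $q_n$ are the singular points, and the interior of a maximal flat cylinder is nonsingular). Therefore just before entering $C_j$ and just after leaving it, $\beta$ runs inside one of the thick components $\Omega_i$ adjacent to $C_j$. The slope of $\beta$ inside $C_j$ equals the angle at which $\beta$ meets the boundary geodesic $\alpha_j$, and this angle is determined on the $\Omega_i$ side. Here one uses that $\alpha_j$ is a geodesic boundary made of saddle connections, so $\beta$ crosses it transversally at a definite point, and a short segment of $\beta$ of $q_n$-length at most $\mathrm{diam}$ must already ``undo'' essentially all of the winding — equivalently, the horizontal travel of $\beta$ across $C_j$ is at most a bounded multiple of $c_j$ because the part of $\beta$ outside $C_j$ that feeds into it has bounded length relative to $c_j$ (both are constrained by the unit total area and by the saddle-connection structure of $\alpha_j$). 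Spelling this out gives $(N-1)c_j \preceq c_j$, hence $N \leq 2$ after absorbing constants, i.e.\ at most one full spiral before entry; and symmetrically at most one after.

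The main obstacle I expect is making the ``winding is controlled by the slope at the boundary'' step rigorous without a quantitative collar lemma: a priori a saddle connection could enter $C_j$ almost parallel to the core curve and wrap around very many times with only a tiny change in height. The resolution should be that such a near-parallel entry forces $\beta$ to have already spent, \emph{outside} $C_j$, a length comparable to the number of wraps times $c_j$ (since it must connect to a zero on the far side of $\alpha_j$), and the flat metric on the adjacent thick component — together with $m(X_n,q_n)=1$ — does not have room for that unless the winding number is bounded. So the crux is a clean estimate of the form: for a saddle connection $\beta$ crossing a maximal flat cylinder $C_j$, the horizontal displacement inside $C_j$ is $\preceq c_j$ plus the length of the adjacent arcs of $\beta$, from which ``at most once'' follows. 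I would isolate this as the one computation to carry out carefully, handling separately the degenerate case $h_j=0$ (where $C_j$ is a union of saddle connections and the statement is about how $\beta$ meets the broken geodesic $\alpha_j$), and then conclude.
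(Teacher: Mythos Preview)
You have misread the lemma. The phrase ``spirals around $C_j$ at most once each time \emph{before entering into} $C_j$'' refers to the portion of $\beta$ lying \emph{outside} the flat cylinder, in the annular cover $A_j\setminus C_j$, just prior to each crossing of $\partial C_j$. Your entire argument is instead about the winding of $\beta$ \emph{inside} the cylinder, and that quantity is genuinely unbounded: a straight segment in a flat strip of height $h_j$ and circumference $c_j$ can wrap as many times as its slope allows, and no area or unit-area constraint on $(X_n,q_n)$ prevents this. This is precisely why, in the application, the twist contribution inside $C_j$ is absorbed into the Dehn--Thurston twist parameter $|t_j|$ rather than being bounded by a constant; the lemma only controls the additional $O(n_j)$ coming from the approach to $C_j$. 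So the statement you set out to prove is false, and the obstacle you yourself flag (``a saddle connection could enter $C_j$ almost parallel to the core curve and wrap around very many times'') is not an obstacle to the proof but a witness that your interpretation cannot be right.

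The paper's actual argument is short and uses a completely different mechanism. In the annular cover, the boundary $\partial C_j$ is a geodesic representative of $\alpha_j$ and therefore contains at least one zero of $q_n$; from each such zero a horizontal critical leaf emanates into $A_j\setminus C_j$. If the approaching arc $\beta_1$ (from an endpoint of $\tilde\beta$ to its first hit on $C_j$) winds more than once around $\partial C_j$, it must cross one of these horizontal critical leaves at two distinct points. Replacing the sub-arc of $\beta_1$ between those two crossings by the concatenation of two horizontal segments through the zero strictly shortens $\beta_1$, contradicting that $\beta$ is a geodesic (saddle connection). There is no slope computation, no appeal to total area, and no analysis of the adjacent thick component; the whole point is the presence of a singularity on $\partial C_j$ and the shortcut it provides.
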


\begin{proof}
Let $A_j$  be the annular covering  of $ X_n$
such that $C_j$ naturally embeds into  $A_j$.
Let $\tilde{\beta}$ be a lift of $\beta$, which is an arc crossing through 
 $C_j$.
 
Consider a segment $\beta_1$ of  $\tilde{\beta}$,
which starts at one of the endpoints of $\tilde{\beta}$  and stops until it
hits $C_j$ for the first time.  We claim that $\beta_1$ spirals around the boundary of $C_j$ more than once.

 In fact, each boundary component of $C_j$ is a geodesic representation of $\alpha_j$ and contains at least one critical point. 
Near a critical point the flat metric is obtained by gluing  at least 
 three rectangle along their horizontal edges. If $\beta_1$ spirals around the boundary of $C_j$ more than once, then there must be a horizontal critical trajectory $\alpha$ of $q_n$ that lies in $A_j\setminus C_j$ 
 and intersects with $\beta_1$ at least twice. 
As a result,  we can make $\beta_1$ even shorter, which is contradicted with the assumption that it is a segment of a saddle connection. 

\end{proof}

\begin{figure}[htb]
\begin{tikzpicture}[scale=0.8]
\begin{scope}[very thick,dashed]
\draw[thin] (0,0) circle (1cm);
\draw (0,0) circle (2cm);
\draw (-2,0) -- (-4,0);
\draw (2,0) -- (1,0);
\draw[thin] (0,0) circle (4cm);
\end{scope}
\draw[red,thick] (-4,0) arc (180 : 0 : 3.5);
\draw[red,thick] (3,0) arc (0 : -180 : 2.75);
\draw[red,thick] (-2.5,0) arc (180 : 90 : 2);
\draw[fill=gray] (0,0) circle (1cm) circle (2cm);
\draw[fill=white] (0,0) circle (1cm);
\node[left] at (-4,0) {$x$};
\node[left] at (-0.3,2.2) {$y$};
\node[left] at (-0.3,2.2) {$y$};
\node[below] at (-2.65,0) {$z$};
\node[right] at (-2,0) {$p$};
\node[right] at (3,0) {$\beta_1$};
\end{tikzpicture}
	\caption{In the figure, $\beta_1$ is represented by an arc connecting $x$ to $y$. If $\beta_1$ spirals around $C_j$ more than once, then it must intersect a horizontal leaf of $q_n$ at a point $z$. The union of the horizontal segments
$\widehat{zp}$ and $\widehat{py}$ is shorter than the segment of $\beta_1$ from $z$ to $y$.} 
	\label{fig_04}
\end{figure}
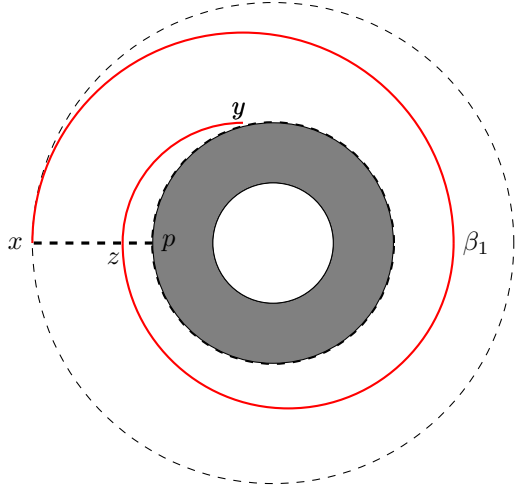

\begin{proof}[Proof of Theorem \ref{thm:limit}]
Up to a subsequence, we 
assume that $\{\alpha_1,\cdots,\alpha_k\}$ is a maximal collection of 
simple closed curves that are pinched along the sequence of Riemann surfaces $X_n$. 
We expand $\{\alpha_1,\cdots,\alpha_k\}$ into a pants decomposition $$\mathcal{P}=\{\alpha_1,\cdots,\alpha_k, \alpha_{k+1}, \cdots, \alpha_{3g-3}\}.$$
Again, up to a subsequence, we may assume that there is a constant $C>0$ such that 
$\ell_{q_n}(\alpha_j) \leq C$ for all $1\leq j \leq 3g-3$. 

Given a simple closed curve $\gamma$, we denote by $\left(n_j, t_j \right)_{j=1}^{3g-3}$
the Dehn-Thurston coordinates of $\gamma$ associated to $\mathcal{P}$. 
To simplify our notation, we assume that $\gamma$ is a geodesic of $q_n$. We decompose $\gamma$ into a finite union of adjacent straight segments $s_i$, 
such that each $s_i$ is either contained in some thick component $\Omega_j$ or contained in some cylinder $C_j$.

Denote the set of segments which are contained in 
the thick components by $\mathcal{S}_0$.  The set of segments that are contained in $\bigcup\limits_{j=1}^k C_j$ is denoted by $\mathcal{S}_1$.

For any $s_i\in \mathcal{S}_1$, either it crosses some $C_j$ or it is a saddle connection contained in the boundary of some $C_j$. 
Denote by $t_j(s_i)$ the twisting number of $s_i$ around $\alpha_j$. 

By Lemma \ref{lem:spiral}, for each $C_j$, we have  

$$\sum_{\substack{s_i\in \mathcal{S}_1 \\
s_i\subset C_j}} |t_j(s_i)|\leq |t_j|+2n_j.$$

If  $C_j$ is a flat cylinder such that $h_j(q_n)>0$, then 
we rotate such that the closed leaves in $C_j$ are vertical.
We are able to bound the length of $s_i\subset C_j$
in terms of its horizontal and vertical length. 
Note that the horizontal length of $s_i$ is less than $h_{q_n}(\alpha_j)$ and the vertical length of $s_i$ is less than $\left( 1+t_j(s_i) \right) \ell_{q_n}(\alpha_j)$. 
If $C_j$ is a degenerate cylinder, then the length of $s_i$ is  bounded above by
$\left( 1+t_j(s_i) \right) \ell_{q_n}(\alpha_j)$.
As a result, we have

\begin{eqnarray}\label{equ:thin1}
 \sum_{\mathcal{S}_1} \ell_{q_n}(s_i) & \leq &  \sum_{j=1}^k  \left(|t_j|+2n_j \right) \ell_{q_n}(\alpha_j)
  + n_j h_{q_n}(\alpha_j) \nonumber \\
  &=&  \sum_{j=1}^k  \left(h_{q_n}(\alpha_j) +2 \ell_{q_n}(\alpha_j) \right) n_j + \ell_{q_n}(\alpha_j) |t_j|.
\end{eqnarray}

Now assume that $s_i\in \mathcal{S}_0$ and $s_i$ is contained in $\Omega_j$. 
First assume that $\Omega_j$ is a pair of pants.
Using Lemma \ref{lem:spiral} again,  $s_i$ spirals around each boundary component of
$\Omega_j$ at most once. As shown by Rafi \cite[Theorem 4]{Rafi07}, the diameter of  $\Omega_j$
is comparable to its size, which is given by the maximal length of the boundary components. It follows that each $s_i$ in $\Omega_j$ contributes a small length and the sum over all possible pairs of pants $\Omega_j$ 
$$\sum_{\Omega_j }\sum_{\substack{s_i\in \mathcal{S}_0 \\
s_i\subset \Omega_j}} \ell_{q_n}(s_i)$$
can be absorbed by the term \eqref{equ:thin1}.

It remains to consider those $s_i$ that are contained in the stable thick components. For each stable component $\Omega_j$, we assume that,  up to scaling by
$m(\Omega_j,q_n)$, the sequence $(\Omega_j, q_n)$ converges to a non-zero quadratic differential $(\Omega_j,\psi_j)$. Denote 
$$\psi=\bigcup_j (\Omega_j,\psi_j). $$
For any $s_i \in \mathcal{S}_0$ that is contained in the interior of  $\Omega_j$, 
when $n$ is sufficiently large we have 
$$\ell_{q_n}(s_i) \asymp \sqrt{m(\Omega_j,q_n)} \ell_{\psi_j}(s_i),$$
where $\psi_j$ is a non-zero quadratic differential on a connected component of $X_\infty$ corresponding to $\Omega_j$. 
So we have
\begin{equation}\label{equ:thick}
 \sum_{\Omega_j} \sum_{\substack{s_i\in \mathcal{S}_0 \\
s_i\subset \Omega_j}}  \ell_{q_n}\leq  \sum_{\Omega_j} \sqrt{m(\Omega_j,q_n)} C(\psi_j)   \sum_{\substack{k+1\leq j \leq 3g-3 \\ \alpha_j \subset \Omega_j}}   (|t_j| + n_j),
\end{equation}
where the sum is taken over all stable components $\Omega_j$,
and each $C(\psi_j)$ is a positive constant depending only on the quadratic differential $\psi_j$.

Combining \eqref{equ:thin1} with \eqref{equ:thick}, 
we conclude that for $n$ sufficiently large the length $\ell_{q_n}(\gamma)$ is bounded above by 
\begin{eqnarray}\label{equ:upperbound}
\ell_{q_n}(\gamma) &\leq&  \sum_{j=1}^k  \left(h_{q_n}(\alpha_j) + 2 \ell_{q_n}(\alpha_j) \right) n_j + \ell_{q_n}(\alpha_j) |t_j| \nonumber \\
&+& C  \sum_{\Omega_j} \sqrt{m(\Omega_j,q_n)}  \sum_{\substack{k+1\leq j \leq 3g-3 \\ \alpha_j \subset \Omega_j}}   (|t_j| + n_j),
\end{eqnarray}
where $C=C(\psi)$. It is easy to extend the above inequality to any multi-curve on the surface. 
With this inequality, applying Proposition
\ref{lem:limit} and the same argument of Mirzakhahi \cite[Proposition 3.6]{Mir2008},
we are able to find a lower bound for $B(X_n, q_n)$:
$$
B(X_n,q_n) \geq \frac{c}{\prod\limits_{\Omega_j} m(\Omega_j, q_n) \prod\limits_{j=1}^k \left( m(C_j, q_n) + \ell_{q_n}(\alpha_j)^2 \right)},
$$
where the product $\prod\limits_{\Omega_j}$ is taken over all stable components,
and here the constant $c>0$ only depends on  $\psi$.

If there is a stable component $\Omega_j$ such that $m(\Omega_j, q_n)\to 0$,
then $B(X_n, q_n) \to \infty$.  
And note that $\ell_{q_n}(\alpha_j) \to 0$ for all $j=1, \cdots, k$.  
So if there is some cylinder $C_j$ such that $m(C_j, q_n)=h_{q_n}(\alpha_j) \ell_{q_n}(\alpha_j) \to 0$,
then we also have $B(X_n, q_n) \to \infty$.  
This confirms Theorem \ref{thm:limit} in case that the sequence $(X_n,q_n)$
does not contain any regular subsequence. 

The remaining case is that there is a positive constant $\delta>0$ such that for each stable component $\Omega_j$
and each $C_j$,  $m(\Omega_j, q_n)$ and $m(C_j, q_n)$ are bounded below by $\delta$. 
In particular, every $\alpha_j$, $j=1, \cdots, k$, is contained in a flat cylinder. 
In this case, by modifying the proof of \eqref{equ:thin1}, we obtain
\begin{eqnarray}\label{equ:thin2}
 \sum_{\mathcal{S}_1} \ell_{q_n}(s_i) & \geq &  \sum_{j=1}^k  \left(|t_j|-2n_j \right) \ell_{q_n}(\alpha_j)
  + n_j h_{q_n}(\alpha_j) \nonumber \\
  &=&  \sum_{j=1}^k  \left(h_{q_n}(\alpha_j) -2 \ell_{q_n}(\alpha_j) \right) n_j + \ell_{q_n}(\alpha_j) |t_j|.
\end{eqnarray} 
Since $\ell_{q_n}(\alpha_j) \to 0$ and $h_{q_n}(\alpha_j) \ell_{q_n}(\alpha_j) >\delta$, we have 
$h_{q_n}(\alpha_j) \to \infty$. 
Thus the lower bound \eqref{equ:thin2} is approximately 
$$\sum_{j=1}^k  h_{q_n}(\alpha_j)  n_j + \ell_{q_n}(\alpha_j) |t_j|.$$
Similarly, for those $s_i$ that are contained in the stable components,
we have 
\begin{equation}\label{equ:thick2}
  \sum_{\mathcal{S}_0} \ell_{q_n}(s_i) \geq   \sum_{\Omega_j} c(\psi_j) \sqrt{m(\Omega_j,q_n)}   \sum_{\substack{k+1\leq j \leq 3g-3 \\ \alpha_j \subset \Omega_j}}   (|t_j| + n_j),
\end{equation}
where $c(\psi_j)$ is a positive constant depending only on the quadratic differential $\psi_j$.
Thus \eqref{equ:thin2} and \eqref{equ:thick2} together imply
\begin{eqnarray*}
B(X_n,q_n) &\leq& C \prod_{\Omega} \frac{1 }{m(\Omega_j, q_n)} \prod_{j=1}^k \frac{1}{m(C_j, q_n)}  \\
&\leq& C \prod_{\Omega_j} \frac{1 }{\delta} \prod_{j=1}^k \frac{1}{\delta},
\end{eqnarray*} 
where $C=C(\psi)$.
Thus the proof of the theorem is complete.

\end{proof}

\begin{proof}[Proof of Corollary \ref{thm:unit}]
We must have $m(C_j, q_n) \to 0$ for all $j=1, \cdots, k$. 
For otherwise,  $m(X_\infty, q_\infty)<1$, which is contradicted to our assumption. 
The lower bound of $B(X_n,q_n)$ in the proof of Theorem \ref{thm:limit} shows that 
$B(X_n,q_n) \to \infty$.
  
\end{proof}

\section{Upper bounds and integration}
\label{sec:collar}
In this section we prove Theorem \ref{thm:int}. 
As we have noted before,
there is no standard collar lemma for quadratic differentials. 
Thus length estimate for quadratic differentials is more difficult. 

\subsection{Collar lemma for quadratic differentials}

Let $(X,q)$ be a quadratic differential with unit area. 
We endow $X$ with the metric $|q|^{1/2}$.

\begin{definition}
    Let $A$ be an annulus on $X$ such that there are no zeroes of $q$ in the interior.
   We say that $A$ is \emph{expanding} if the curvature of each boundary component of $A$ has constant sign, either positive or
negative at each point, and the boundary curves are equidistant.
\end{definition}
We denote the boundary components of an expanding annulus $A$ by $\partial_0 A$ and $\partial_1 A$. Without loss of generality, we  assume that the total curvature $\kappa(\partial_0 A)= \kappa(A)> 0$ and $\kappa(\partial_1 A)= -\kappa(A) < 0$. 
Let $d(A)$ be the distance between $\partial_0 A$ and $\partial_1 A$,
which is called the \emph{width} of the expanding annulus.

In the following, we assume that $\gamma$ is a simple closed curve on $X$. Let $C=C(\gamma)$ be the maximal flat cylinder on
$q$ with core curves homotopic to $\gamma$, and let $A=A(\gamma)$ be the maximal expanding annulus,  one of whose boundary components is a $q$-geodesic homotopic to $\gamma$.  
Denote the conformal modulus of $A$ and $C$ by $\operatorname{Mod}(A)$
and $\operatorname{Mod}(C)$, respectively. Note that $C$ is degenerate when the
 geodesic representative of $\gamma$ is unique (in this case $\operatorname{Mod}(C)=0$).  
 
 The collar lemma for quadratic differentials is established by Minsky
 \cite{Minsky92} and Choi-Rafi-Series \cite{CRS08}. The following statement can be found in Lenzhen-Masur \cite[Lemma 1]{LM2010}.

\begin{lemma}\label{lem:annulus}
  Assume that $q\in Q^1(X)$ and $\gamma$ is a simple closed curve such that $\ell_X(\gamma)$ is sufficiently small. Then  
  \begin{enumerate}
      \item[(1)]  $$\frac{1}{\ell_X(\gamma)} \asymp \max\left\{ \operatorname{Mod}(A), \operatorname{Mod}(C)\right\};$$
      \item[(2)] $$\operatorname{Mod}(A) \asymp \log \frac{d(A)}{\ell_q(\gamma)}.$$
  \end{enumerate}
\end{lemma}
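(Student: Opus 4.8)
The plan is to derive the lemma from the description of the $\epsilon$-thin part of $(X,q)$ around $\gamma$ due to Minsky \cite{Minsky92} and Choi--Rafi--Series \cite{CRS08}; since the geometric heart of the matter is contained in those references (and the statement appears as \cite[Lemma 1]{LM2010}), I will mainly indicate how the pieces fit together. The first step is to translate $(1)$ into a statement about extremal length. For any essential embedded annulus $B\subset X$ homotopic to $\gamma$ one has $\operatorname{Mod}(B)\le 1/\operatorname{Ext}_X(\gamma)$, with equality for the maximal such annulus. Combining Maskit's inequality $\operatorname{Ext}_X(\gamma)\le \tfrac12 \ell_X(\gamma)e^{\ell_X(\gamma)/2}$ (quoted earlier) with the matching lower bound $\operatorname{Ext}_X(\gamma)\succeq \ell_X(\gamma)$, valid once $\ell_X(\gamma)$ is small, gives $1/\operatorname{Ext}_X(\gamma)\asymp 1/\ell_X(\gamma)$. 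As $A$ and $C$ are themselves essential embedded annuli homotopic to $\gamma$, this already yields $\max\{\operatorname{Mod}(A),\operatorname{Mod}(C)\}\preceq 1/\ell_X(\gamma)$, so the real content of $(1)$ is the reverse bound: the maximal embedded annulus cannot be much fatter than $A\cup C$.

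For that reverse bound I would use the flat structure of the thin part. Represented by its $q$-geodesic, a regular neighbourhood of $\gamma$ is the union of the maximal flat cylinder $C$ (possibly degenerate) together with, on its two sides, two maximal expanding annuli $A_0,A_1$ whose interiors contain no zeros; these three annuli are disjoint, so $\operatorname{Mod}(C)+\operatorname{Mod}(A_0)+\operatorname{Mod}(A_1)\le 1/\operatorname{Ext}_X(\gamma)$. Conversely—this is where I would invoke \cite{Minsky92}—one shows that every essential embedded annulus homotopic to $\gamma$ has modulus at most $\operatorname{Mod}(C)+\operatorname{Mod}(A_0)+\operatorname{Mod}(A_1)+O(1)$, because the complement of $C\cup A_0\cup A_1$ is $\epsilon$-thick and contributes only a bounded amount. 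Taking $A$ to be whichever of $A_0,A_1$ has the larger modulus, the three-term sum is $\asymp\max\{\operatorname{Mod}(C),\operatorname{Mod}(A)\}$, and together with the previous paragraph this proves $(1)$.

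For $(2)$ I would argue directly with the flat geometry of the expanding annulus $A$. Its interior is flat, so for $0\le t\le d(A)$ the curve $\gamma_t$ equidistant to the geodesic boundary $\partial_0A$ has length $\ell(t)=\ell_q(\gamma)+\kappa\, t$, where $\kappa$ is the total turning of $\partial_0 A$ towards $A$: indeed the derivative in $t$ of the length of $\gamma_t$ is the total geodesic curvature of $\gamma_t$, which by Gauss--Bonnet in the flat region equals the constant $\kappa$. Since $(X,q)$ has at most $4g-4$ zeros, $\kappa$ is bounded above by a constant depending only on $g$, and $\kappa>0$ because $A$ is expanding. Using the test metric $\rho=\ell(t)^{-1}|q|^{1/2}$, the family of arcs crossing $A$ has extremal length at least $\int_0^{d(A)}\ell(t)^{-1}\,dt$, so $\operatorname{Mod}(A)\ge \kappa^{-1}\log\!\big(1+\kappa\, d(A)/\ell_q(\gamma)\big)$; a dual estimate (embedding $A$ conformally in a flat cone of comparable total angle, or a matching test metric for the core curves) bounds $\operatorname{Mod}(A)$ above by the same quantity up to a multiplicative constant. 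Since $\kappa\asymp 1$ and $d(A)\gg\ell_q(\gamma)$ once $\ell_X(\gamma)$ is small, this reduces to $\operatorname{Mod}(A)\asymp\log\!\big(d(A)/\ell_q(\gamma)\big)$.

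The only genuinely non-formal ingredient is the upper bound in the second paragraph—that no essential annulus homotopic to $\gamma$ beats $C\cup A_0\cup A_1$ by more than an additive constant. This is exactly Minsky's analysis of the flat thin part, and it is the step I expect to be the main obstacle if one insisted on a self-contained proof; here we simply invoke \cite{Minsky92, CRS08, LM2010}, the remaining steps being Gauss--Bonnet bookkeeping and standard length--area estimates.
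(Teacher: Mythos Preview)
The paper does not prove this lemma at all: it simply attributes the result to Minsky \cite{Minsky92} and Choi--Rafi--Series \cite{CRS08}, and points to \cite[Lemma 1]{LM2010} for the stated form. Your proposal invokes exactly the same references for the substantive step (the upper bound on the modulus of an arbitrary embedded annulus in terms of $\operatorname{Mod}(C)+\operatorname{Mod}(A_0)+\operatorname{Mod}(A_1)$), and the surrounding bookkeeping you supply---Maskit's comparison $\operatorname{Ext}_X(\gamma)\asymp \ell_X(\gamma)$ for small $\ell_X(\gamma)$, the Gauss--Bonnet computation $\ell(t)=\ell_q(\gamma)+\kappa t$ with $\pi\le\kappa\le (4g-4)\pi$, and the series/length--area estimate giving $\operatorname{Mod}(A)\asymp\kappa^{-1}\log(1+\kappa\,d(A)/\ell_q(\gamma))$---is correct and is the standard way to unpack those references. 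So your approach is not different from the paper's; it is strictly more informative, since the paper provides no argument whatsoever.

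One small point to tighten: your final reduction in part~(2) uses $d(A)\gg\ell_q(\gamma)$, which is guaranteed only when $\operatorname{Mod}(A)$ is large (for instance when it realises the maximum in part~(1)). In the regime where $\operatorname{Mod}(C)$ dominates and $\operatorname{Mod}(A)$ stays bounded, $d(A)/\ell_q(\gamma)$ need not be large and the asymptotic $\log(1+x)\asymp\log x$ fails; the honest statement is $\operatorname{Mod}(A)\asymp\log\bigl(1+d(A)/\ell_q(\gamma)\bigr)$, which is what the paper actually uses downstream. This is cosmetic, but worth noting if you want the lemma to hold unconditionally rather than only in the case relevant to the later applications.
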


By Lemma \ref{lem:annulus},
either $\frac{1}{\ell_X(\gamma)} \asymp \operatorname{Mod}(C)$
or $\frac{1}{\ell_X(\gamma)} \asymp \operatorname{Mod}(A)$. 
In the first case, $C$ is a flat cylinder, so we have $$\operatorname{Mod}(C)= \frac{h(C)}{\ell_q(\gamma)},$$ where $h(C)$ denotes the width (or height) of $C$. 
Thus we have 
\begin{equation*}\label{equ:flat}
h(C) \asymp \frac{\ell_q(\gamma)}{\ell_X(\gamma)}.
\end{equation*}

In the second case, we have 
\begin{equation*}\label{equ:expanding}
    \log \frac{d(A)}{\ell_q(\gamma)} \asymp \operatorname{Mod}(A)\asymp \frac{1}{\ell_X(\gamma)},
\end{equation*}
which implies 
$$d(A) \succ \ell_q(\gamma) e^{1/\ell_X(\gamma)} \geq \frac{\ell_q(\gamma)}{\ell_X(\gamma)}.$$

In conclusion, if $\ell_X(\gamma)$ is sufficiently small, then there is an annulus homotopic to $\gamma$ 
whose width is at least 
\begin{equation}\label{equ:width}
c \frac{\ell_q(\gamma)}{\ell_X(\gamma)},
\end{equation}
where $c>0$ is a topological constant. 

\begin{remark}
There is a related result of Rafi \cite[Theorem 2]{Rafi07}, which states that if $\ell_X(\gamma)< L$ and $\alpha$
is a simple closed curve that intersects with $\gamma$, then there is a constant $D=D(L)$ such that 
\begin{equation}\label{equ:rafi}
\ell_q(\alpha) \geq \frac{\ell_q(\gamma)}{D}.
\end{equation}
The lower bound \eqref{equ:width} is better than \eqref{equ:rafi} when $\ell_X(\gamma)$ is small. Note that for $q\in Q^1(X)$, we have $\ell_X(\gamma)\to 0$ if $\ell_q(\gamma) \to 0$. However, there is no quantitative estimate of   $\ell_X(\gamma)$, and, unlike hyperbolic metrics,  small $\ell_q(\gamma)$ does not imply that the width of the cylinder is large. 
\end{remark}

\subsection{Upper bounds}

Let $\epsilon>0$ be a fixed small constant. It is obvious that if $X$ lies in the $\epsilon$-thick part of $\mathcal{M}_g$, then there is a constant $C(\epsilon)$ such that $B(X,q)<C(\epsilon)$ for any $q\in Q^1(X)$. 

In the following, we assume that $X$ is in the $\epsilon$-thin part of $\mathcal{M}_g$. We can cover  the $\epsilon$-thin part of $\mathcal{M}_g$ by finitely many open sets $\mathcal{U}_\epsilon^i$ such that for $X\in \mathcal{U}_\epsilon^i$ we have:
\begin{itemize}
  \item $\{\gamma_1, \cdots, \gamma_k\}$ is the maximal collection of short curves on $X$, i.e. $\ell_X(\gamma_j)<\epsilon$.
  \item We can expand $\{\gamma_1, \cdots, \gamma_k\}$ to a pants decomposition $\mathcal{P}=\{\gamma_1, \cdots, \gamma_{3g-3}\}$ such that $\ell_X(\gamma_j)\leq B_g$, where $B_g$ is Bers constant.
\end{itemize}  

\begin{theorem}\label{thm:upper}
    Let $X\in \mathcal{U}_\epsilon^i$. Then for any $q\in Q^1(X)$, we have 
    $$B(X,q)\leq C \prod_{j=1}^{3g-3} \frac{\ell_X(\gamma_j)}{\ell_q(\gamma_j)^2}. $$
\end{theorem}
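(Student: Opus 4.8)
The plan is to bound $B(X,q)$ by counting integral multi-curves via Proposition~\ref{lem:limit}, so the real task is to produce an upper bound for the number of $\gamma\in\ML(\mathbb{Z})$ with $\ell_q(\gamma)\le L$ in terms of the Dehn–Thurston coordinates $(n_j,t_j)_{j=1}^{3g-3}$ associated to the pants decomposition $\mathcal{P}=\{\gamma_1,\dots,\gamma_{3g-3}\}$. The key is a \emph{lower} bound for $\ell_q(\gamma)$ of the shape $\ell_q(\gamma)\succeq \sum_{j=1}^{3g-3} w_j\, n_j + w_j\,|t_j|$ (or at least $\ell_q(\gamma)\succeq \max_j w_j(n_j+|t_j|)$ up to bounded multiplicative loss), where $w_j\asymp \ell_q(\gamma_j)^2/\ell_X(\gamma_j)$. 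Once such a bound is in hand, the number of lattice points $(n_j,t_j)$ with $\sum w_j(n_j+|t_j|)\le L$ is $O\!\big(L^{6g-6}/\prod_j w_j^2\big)$, and dividing by $L^{6g-6}$ and letting $L\to\infty$ gives $B(X,q)\preceq \prod_j w_j^{-2} = \prod_j \ell_X(\gamma_j)^2/\ell_q(\gamma_j)^2$. Wait — comparing with the claimed exponent, the weight should in fact be $w_j\asymp \ell_q(\gamma_j)/\sqrt{\ell_X(\gamma_j)}$, so that $\prod w_j^{-2}=\prod \ell_X(\gamma_j)/\ell_q(\gamma_j)^2$; I will need to be careful which power of $\ell_X(\gamma_j)$ enters the width estimate, but the collar lemma (Lemma~\ref{lem:annulus}) together with \eqref{equ:width} is exactly the tool that pins this down.

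Here is how I would organize the length lower bound. Realize $\gamma$ as a $q$-geodesic and cut it into segments $s_i$ as before, distinguishing segments crossing the collar/cylinder/expanding annulus of each $\gamma_j$ from segments staying in the thick pieces. For a short curve $\gamma_j$ ($\ell_X(\gamma_j)<\epsilon$): by Lemma~\ref{lem:annulus} there is either a flat cylinder $C_j$ of height $h(C_j)\asymp \ell_q(\gamma_j)/\ell_X(\gamma_j)$ or an expanding annulus $A_j$ of width $d(A_j)\succeq \ell_q(\gamma_j)/\ell_X(\gamma_j)$ (possibly much larger). Each of the $n_j$ strands of $\gamma$ crossing $\gamma_j$ must traverse this collar, contributing width, and the twisting forces extra length proportional to $\ell_q(\gamma_j)$ per unit twist, using Lemma~\ref{lem:spiral} to control the discrepancy between the combinatorial twist $t_j$ and the geometric winding. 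This yields $\ell_q(\gamma)\succeq \sum_{j\le k}\big(\tfrac{\ell_q(\gamma_j)}{\ell_X(\gamma_j)}\, n_j + \ell_q(\gamma_j)\,|t_j|\big)$. For the non-short curves $\gamma_j$ with $k<j\le 3g-3$: these lie in (uniformly bounded) thick pieces, and here I would invoke Rafi's comparison \eqref{equ:rafi}, or the thick-component convergence of Lemma~\ref{lem:stable}, to get $\ell_q(\gamma)\succeq \ell_q(\gamma_j)(n_j+|t_j|)$ with a constant depending only on the finitely many limiting flat structures on thick pieces — and since $\ell_X(\gamma_j)\asymp 1$ there, $\ell_q(\gamma_j)\asymp \ell_q(\gamma_j)/\sqrt{\ell_X(\gamma_j)}$, consistent with the short-curve weights after adjusting powers. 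Adding all these contributions (being careful that segments are shared, so one only gets each $n_j,t_j$ up to bounded multiplicity) gives the desired lower bound for $\ell_q(\gamma)$.

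The lattice-point count is then routine: $\#\{(n_j,t_j): n_j\ge 0,\ \sum_j w_j(n_j+|t_j|)\le L\}$ is, up to a genus-dependent constant, the Euclidean volume of the simplex $\{x_j,y_j\ge 0: \sum w_j(x_j+y_j)\le L\}$, which is $C_g L^{6g-6}/\prod_j w_j^2$; Proposition~\ref{lem:limit} then converts this into $B(X,q)\le C\prod_j w_j^{-2}$, and substituting $w_j\asymp \ell_q(\gamma_j)/\sqrt{\ell_X(\gamma_j)}$ gives the stated formula. The finiteness of the cover $\{\mathcal{U}_\epsilon^i\}$ and the convergence statements of Section~\ref{sec:limit} ensure the implied constant $C$ can be taken uniform over each $\mathcal{U}_\epsilon^i$ and hence over all of $Q^1\mathcal{M}_g$.

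The main obstacle is the non-short curves $\gamma_j$ in the thick pieces: the flat metric on a thick component is only bounded \emph{below} away from degeneration in the sense of bounded hyperbolic geometry, but the $q$-metric there can still be very anisotropic, so extracting a clean lower bound $\ell_q(\gamma)\succeq \ell_q(\gamma_j)(n_j+|t_j|)$ with a constant independent of the (varying) flat structure requires either a compactness argument over the finitely many limiting differentials $\psi_j$ from Lemma~\ref{lem:stable}, or a careful use of Rafi's \eqref{equ:rafi} together with a lower bound on how twisting contributes length inside a thick pair-of-pants-or-larger piece. A secondary subtlety is making sure the segment decomposition does not lose more than a bounded factor — i.e.\ that the total length is genuinely bounded below by the sum of the per-collar contributions and not merely their maximum — which is where convexity (Lemma~\ref{lem:convex}) and the at-most-once spiraling of Lemma~\ref{lem:spiral} do the essential bookkeeping.
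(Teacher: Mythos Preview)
Your approach is essentially the paper's: use the collar-width estimate \eqref{equ:width} to get $\ell_q(\gamma)\succeq\sum_{j=1}^{3g-3}\big(\tfrac{\ell_q(\gamma_j)}{\ell_X(\gamma_j)}\,n_j+\ell_q(\gamma_j)\,|t_j|\big)$ and then do the lattice count via Proposition~\ref{lem:limit}. Two points where you over-complicate. First, the weights on $n_j$ and $|t_j|$ are genuinely different (width $\asymp\ell_q(\gamma_j)/\ell_X(\gamma_j)$ versus circumference $\ell_q(\gamma_j)$); your symmetric $w_j=\ell_q(\gamma_j)/\sqrt{\ell_X(\gamma_j)}$ is only an artifact that happens to give the correct product $\prod w_j^{-2}$, and indeed you already wrote the correct asymmetric bound in your middle paragraph. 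Second, the paper does not split into short and non-short $\gamma_j$: it applies the width bound \eqref{equ:width} uniformly to every curve in $\mathcal P$, which is legitimate since for $\epsilon\le\ell_X(\gamma_j)\le B_g$ the inequality $d_j\succeq\ell_q(\gamma_j)/\ell_X(\gamma_j)\asymp\ell_q(\gamma_j)$ is just Rafi's \eqref{equ:rafi} with a constant depending only on $B_g$ --- so your ``main obstacle'' about thick pieces and limiting differentials $\psi_j$ dissolves, and no compactness argument is needed.
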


\begin{proof}

    Given any simple closed curve $\gamma$, let 
    $$\left(n_1, n_2, \cdots, n_{3g-3},t_1, t_2, \cdots, t_{3g-3}\right)$$
    be the Dehn-Thurston coordinates of $\gamma$ associated to $\mathcal{P}$. By Lemma \ref{lem:annulus}, for each $\gamma_j$, there is an annulus $A(\gamma_j)$ of width $d_j$ satisfying 
    $$d_j \geq c \frac{\ell_q(\gamma_j)}{\ell_X(\gamma_j)}.$$

    Since $\gamma$ goes through $A(\gamma_j)$ about $n_j$ times and all $A(\gamma_j)$ are pairwise disjoint, we have
\begin{eqnarray*}
\ell_q(\gamma) &\geq &  \frac{\sqrt{2}}{2}\sum \limits_{j=1}^{3g-3} \left( n_jd_j+|t_j| \ell_q(\gamma_j) \right) \\
&\geq&   \frac{\sqrt{2}}{2} \sum \limits_{j=1}^{3g-3} \left( c \frac{\ell_q(\gamma_j)}{\ell_X(\gamma_j)}n_j +|t_j| \ell_q(\gamma_j) \right). \\
\end{eqnarray*}

    By our previous estimate of $B(X,q)$, we have
    $$B(X,q)\leq C \prod_{j=1}^{3g-3} \frac{\ell_X(\gamma_j)}{\ell_q(\gamma_j)^2}. $$   

\end{proof}

Note that  $\ell_X (\gamma_j) \leq \pi \operatorname{Ext}_X(\gamma_j)$ (see \cite{Maskit}), so we have
\begin{corollary} There is a constant $C$ such that for any $X\in \mathcal{U}_\epsilon^i$ and $q\in Q^1(X)$, we have 
    \begin{equation}\label{equ:extremalbound}
B(X,q)\leq C\prod_{j=1}^{3g-3}\frac{\operatorname{Ext}_X(\gamma_j)}
{\ell^2_q(\gamma_j)}.
\end{equation}
\end{corollary}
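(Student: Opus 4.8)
The plan is to derive \eqref{equ:extremalbound} directly from Theorem~\ref{thm:upper} by trading each hyperbolic length $\ell_X(\gamma_j)$ for the corresponding extremal length. Theorem~\ref{thm:upper} already supplies the bound
$$B(X,q)\leq C\prod_{j=1}^{3g-3}\frac{\ell_X(\gamma_j)}{\ell_q(\gamma_j)^2}$$
for every $X\in\mathcal{U}_\epsilon^i$ and every $q\in Q^1(X)$, with a constant $C$ depending only on the genus $g$. So the only extra ingredient needed is a one-sided comparison between $\ell_X(\gamma_j)$ and $\operatorname{Ext}_X(\gamma_j)$, and the whole argument is a termwise substitution into an estimate we already have.

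First I would invoke Maskit's inequality \cite{Maskit} in the form $\ell_X(\gamma)\leq \pi\,\operatorname{Ext}_X(\gamma)$, valid for every simple closed curve $\gamma$ on $X$; this is the lower half of the Maskit comparison, complementary to the upper bound $\operatorname{Ext}_X(\gamma)\leq \tfrac12\ell_X(\gamma)e^{\ell_X(\gamma)/2}$ used earlier in the paper. Applying it to each of the $3g-3$ curves $\gamma_1,\dots,\gamma_{3g-3}$ of the pants decomposition $\mathcal{P}$ and taking the product gives
$$\prod_{j=1}^{3g-3}\frac{\ell_X(\gamma_j)}{\ell_q(\gamma_j)^2}\ \leq\ \pi^{3g-3}\prod_{j=1}^{3g-3}\frac{\operatorname{Ext}_X(\gamma_j)}{\ell_q(\gamma_j)^2}.$$
Substituting this into the bound of Theorem~\ref{thm:upper} and absorbing the factor $\pi^{3g-3}$ into the constant (which still depends only on $g$) yields \eqref{equ:extremalbound}.

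I do not expect any real obstacle: the statement is essentially a cosmetic reformulation of Theorem~\ref{thm:upper}. The only point worth verifying is uniformity of the constant, and this is automatic, since both the constant in Theorem~\ref{thm:upper} and the Maskit constant $\pi$ are independent of the point $X$ in the finitely many charts $\mathcal{U}_\epsilon^i$ and of the choice of $q\in Q^1(X)$, so the resulting $C$ is genuinely universal. The reason for recording \eqref{equ:extremalbound} in this form is that extremal length, being a conformal invariant, interacts more cleanly with the flat geometry of $q$ and with the Masur--Veech measure than hyperbolic length does; this is the shape of the estimate that will be fed into the integrability argument for Theorem~\ref{thm:int}.
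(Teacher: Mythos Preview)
Your proof is correct and matches the paper's own argument exactly: the corollary is stated immediately after the observation that $\ell_X(\gamma_j)\le \pi\,\operatorname{Ext}_X(\gamma_j)$ from \cite{Maskit}, and the bound \eqref{equ:extremalbound} follows by substituting this termwise into Theorem~\ref{thm:upper} and absorbing $\pi^{3g-3}$ into the constant. One small inaccuracy in your closing remark: in the paper the integrability argument for Theorem~\ref{thm:int} actually feeds directly off the hyperbolic-length bound of Theorem~\ref{thm:upper} (dropping the bounded factors $\ell_X(\gamma_j)$), not off \eqref{equ:extremalbound}.
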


\subsection{$p$-integrability}
Now we use Theorem \ref{thm:upper} to prove Theorem \ref{thm:int}.
\begin{proof}[Proof of Theorem \ref{thm:int}]
It suffices to restrict  $B(X,q)$ on  the principal stratum  $\Pg$.

Fix $0<p<1$. Without loss of generality,
we may assume that $(X,q)$ lies in the thin part of the moduli space. 
Since the $\epsilon$-thin part of $\mathcal{M}_g$ can be covered by finitely many open sets $\mathcal{U}_\epsilon^i$, it suffices to assume that $X$ lies in some fixed $\mathcal{U}_\epsilon^i$.
As in the proof of Theorem \ref{thm:upper}, by
choosing an appropriate pants decomposition $\{\gamma_1, \cdots, \gamma_{3g-3}\}$, we have
\begin{eqnarray*}
B(X,q)^p &\leq&  C \prod^{3g-3}_{i=1} \left( \frac{\ell_X(\gamma_j)}{\ell^2_q(\gamma_j)}\right)^p \\
&=& O\left( \prod^{3g-3}_{j=1} \frac{1}{\ell^{2p}_q(\gamma_j)}  \right).
\end{eqnarray*}

Note that for each $\gamma_j$ there is a 
saddle connection $\beta_j$ contained in the geodesic representation of $\gamma_j$,
and the collection $\beta_1, \cdots, \beta_{3g-3}$ can be chosen to be independent on the periodic coordinates. So the integration of $B(X,q)^p$ with respect to the Masur-Veech measure is bounded by 
\begin{eqnarray*}
 O\left( \prod_{j=1}^{3g-3} \int_{0}^{1}\int_{0}^{1} \frac{1}{(x_j^2 + y_j^2)^p} \  dx_j dy_j  \right).
\end{eqnarray*}
The assumption $0<p<1$ ensures that the integral is finite. 
\end{proof}

\begin{remark}
   Although we know that $\ell_X(\gamma_j) \to 0$ when $\ell_q(\gamma_j)\to 0$,
  there is no efficient estimate for $\ell_X(\gamma_j)$. For instance, 
  the width of the expanding annulus of $\gamma_j$ may be small. Because of such a defect, we are not able to show that $B(X,q)$ is integrable.
\end{remark}

\subsection{Jenkins-Strebel differentials with long cylinders}

Let  $(X,q)$ be a quadratic differential. Denote by $\Gamma(q)$ the critical graph of the horizontal foliation of 
$q$. We say that $(X,q)$ is a Jenkins-Strebel differential if $\Gamma(q)$ is compact and the complement $X\setminus \Gamma(q)$  is a union of disjoint flat cylinders, each of which is foliated by horizontal closed leaves homotopic to a simple closed curve. We shall denote the horizontal foliation of $q$ by
$\sum\limits_{j=1}^{k} h_j \gamma_j,$
where $\gamma_j$ is the core curve of the cylinder $C_j$.  Denote the circumference of $C_j$ by $\ell_j$. 
Note that the area of $q$ is equal to 
$\sum\limits_{j=1}^{k} h_j \ell_j.$

For simplicity, we assume that $(X,q)$ is a maximal ($k=3g-3$) Jenkins-Strebel differential of unit area. 
Moreover, we assume that $h_j \gg 1 \gg \ell_j >0$. 
Let $\mathcal{P}=\{\gamma_j\}_{j=1}^{3g-3}$. 
Choose $\{\beta_j\}_{j=1}^{3g-3}$ such that $\{(\gamma_j, \beta_j)\}_{j=1}^{3g-3}$ defines a curve system.

It is easy to show that 
$$ h_j \leq \ell_q(\beta_j) \leq  h_j+ 1.$$
By the proof of Theorem \ref{thm:limit}, we have 
$$B(q) \geq  \prod_{j=1}^{3g-3} \frac{c}{(h_j+1)\ell_j} \asymp \prod_{j=1}^{3g-3} \frac{1}{h_j\ell_j}= 
\prod_{j=1}^{3g-3} \frac{1}{m(C_j, q)}.$$

On the other hand, note that each cylinder $C_j$ has height $h_j$, which is assumed to be large.
This is similar to the case of hyperbolic surfaces that for each  simple closed geodesic with small length $\ell$ there is a collar neighborhood with large width $\log 1/|\ell|$. 
Similar to the proof of \cite[Theorem 1.5]{AA}, one can show that 
$$B(q) \leq \prod_{j=1}^{3g-3} \frac{C}{h_j\ell_j}.$$

Note that in this special case, the upper bound of $B(X,q)$
in \eqref{equ:extremalbound}  satisfies
$$\prod_{j=1}^{3g-3}\frac{\operatorname{Ext}_X(\gamma_j)}
{\ell_j^2} \asymp \prod_{j=1}^{3g-3} \frac{\frac{\ell_j}{h_j}}
{\ell_j^2} = \prod_{j=1}^{3g-3} \frac{1}{h_j\ell_j}.$$
Thus the product in \eqref{equ:extremalbound} also gives the lower bound. 

The above estimates show that $B(q)$ is not a proper function in the moduli space. 
For instance, we may take a sequence of $q_n$ with the above combinatorial structure such that 
for all $\gamma_j$ we have $\ell_j \to 0$ while $h_j\ell_j = \frac{1}{3g-3}$. 
As $n\to \infty$ we have $B(q_n)$ bounded by a constant.

\bibliographystyle{plain}

\bibliography{bibliography}

\end{document}